\documentclass[smallextended]{svjour3}
\usepackage{latexsym,amssymb,amsmath}
\smartqed
\usepackage{graphicx}
\usepackage{color}
\RequirePackage{fix-cm}
\usepackage[section]{placeins}
\def\ox{\bar{x}}

\def\epi{{\rm epi}\,}

\begin{document}
	
	\renewcommand{\theequation}{\thesection.\arabic{equation}}
	\normalsize
	
	\setcounter{equation}{0}
	
	\title{Optimality conditions based on the Fr\'echet second-order subdifferential}
	
	\author{D.T.V.~An       \and
		N.D. Yen
	}

	\institute{D.T.V.~An\ \at
		Department of Mathematics and Informatics, Thai Nguyen University of Sciences, Thai Nguyen city, Vietnam\\
		\email{andtv@tnus.edu.vn}           	\and
		N.D. Yen
		 \at
	Institute of Mathematics, Vietnam Academy of Science and Technology, Hanoi, Vietnam\\
		\email{ndyen@math.ac.vn}
	}
	\date{Received: date / Accepted: date}
	
	\maketitle
	
	\begin{abstract}
     This paper focuses on second-order necessary optimality conditions for constrained optimization problems on Banach spaces. For problems in the classical setting, where the objective function is $C^2$-smooth, we show that strengthened second-order necessary optimality conditions are valid if the constraint set is generalized polyhedral convex. For problems in a new setting, where the objective function is just assumed to be $C^1$-smooth and the constraint set is generalized polyhedral convex, we establish sharp second-order necessary optimality conditions based on the Fr\'echet second-order subdifferential of the objective function and the second-order tangent set to the constraint set. Three examples are given to show that the used hypotheses are essential for the new theorems. Our second-order necessary optimality conditions refine and extend several existing results.
	\end{abstract}
	
	\keywords{Constrained optimization problems on Banach spaces \and second-order necessary optimality conditions \and Fr\'echet second-order subdifferential \and second-order tangent set \and generalized polyhedral convex set.}

	\subclass{49K27 \and 49J53 \and  90C30 \and 90C46 \and 90C20}
	
	\newpage
	\section{Introduction}
\markboth{\centerline{\it Optimality conditions}}{\centerline{\it D.T.V.~An
		and N.D.~Yen}} \setcounter{equation}{0}
		
			It is well-known that second-order optimality conditions are fundamental results in nonlinear mathematical programming \cite{Ben-Tal1980,Ben-Tal1982,Bonnans_Shapiro_2000,L_Y_2008,McCormick1967,Penot1994,Penot1999,Polyak,Ruszczynski2006}, which have numerous applications in stability and sensitivity analysis, as well as in numerical methods for optimization problems. The need of generalizing these conditions to broader settings continues to attract attention of many researchers; see, e.g., \cite{ChieuLeeYen2017,HSN_1984,Huy_Tuyen} and the references therein.

			\medskip
			In classical second-order optimality conditions, the objective function of the finite-dimensional optimization problem in question is assumed to be twice continuously differentiable (a $C^2$-smooth function for short). If the  objective function is continuously Fr\'echet differentiable and the gradient mapping is locally Lipschitz, then one has deal with a $C^{1,1}$-smooth problem. Second-order optimality conditions for finite-dimensional $C^{1,1}$- smooth optimization problems have been obtained by Hiriart-Urruty \textit{et al.}~\cite{HSN_1984}, Huy and Tuyen~\cite{Huy_Tuyen}.

		 \medskip
		If the objective function of an optimization problem is continuously Fr\'echet differentiable and the gradient mapping is merely continuous, then one has deal with a $C^{1}$-smooth problem. The class of $C^{1}$-smooth optimization problems is much larger than that of $C^{1,1}$- smooth optimization problems. As far as we know, the tools employed in \cite{HSN_1984,Huy_Tuyen} are no longer suitable for $C^{1}$-smooth problems. To describe locally optimal solutions of $C^1$-smooth unconstrained minimization problems in a Banach space setting, Chieu \textit{et al.}~\cite{ChieuLeeYen2017} have explored the possibility of using the Fr\'echet second-order subdifferential and the limiting second-order subdifferential, which can be viewed as generalized Hessians of extended-real-valued functions. These concepts are due to Mordukhovich \cite{Mordukhovich_1992,Mordukhovich_2006a}. The limiting second-order subdifferential has many applications in stability analysis of optimization problems; see, e.g., \cite{Mo_Ro_SIOPT2012,MRS_SIOPT2013,Poli_Roc_1998} and the references therein. As shown in~\cite{ChieuChuongYaoYen2011,ChieuHuy2011}, the Fr\'echet second-order subdifferential is very useful in characterizing convexity of extended-real-valued functions. The authors of~\cite{ChieuLeeYen2017} have shown that the Fr\'echet second-order subdifferential is suitable for presenting second-order necessary optimality conditions~\cite[Theorems~3.1 and 3.3]{ChieuLeeYen2017}, while the limiting second-order subdifferential works well for second-order sufficient optimality conditions~\cite[Theorem~4.7 and Corollary~4.8]{ChieuLeeYen2017}. Consulting a preprint version of~\cite{ChieuLeeYen2017}, which appeared in 2013, Dai~\cite{LVD2014} has extended the finite-dimensional version of \cite[Theorem 3.3]{ChieuLeeYen2017} to the case of $C^1$-smooth optimization problems whose  constraint sets are described by linear equalities.

		\medskip
		Our interest in knowing deeper the role of second-order tangent sets in second-order optimality conditions mainly comes from the book of Bonnans and Shapiro~\cite{Bonnans_Shapiro_2000} and Theorem 3.45 in the book by Ruszczynski \cite{Ruszczynski2006}. When the second-order derivative of the $C^2$-smooth objective function is replaced by the Fr\'echet second-order subdifferential or the limiting second-order subdifferential, nontrivial questions arise if one wants to have second-order optimality conditions based on second-order tangent sets. Since optimization problems with polyhedral convex constraint sets or generalized polyhedral convex constraint sets will be encountered frequently in our investigations, we remark that they are of great importance in optimization theory (see for example~\cite{MRS_SIOPT2013}, where full stability of the local minimizers of such problems was characterized).  An extended-real-valued function defined on a Banach space is said to be a generalized polyhedral convex function if its epigraph is a generalized polyhedral convex set. The interested reader is referred to \cite[pp.~71--77]{Luan_Yen} and~\cite{Luan_Yao} for more comments on the role of generalized polyhedral convex sets and generalized polyhedral convex functions.
				
\medskip
The main goal of this paper is to clarify the applicability of the Fr\'echet second-order subdifferential to establishing second-order optimality conditions for constrained minimization problems. For problems in the classical setting, where the objective function is $C^2$-smooth, we show that strengthened second-order necessary optimality conditions are valid if the constraint set is generalized polyhedral convex. For problems in a new setting, where the objective function is just assumed to be $C^1$-smooth and the constraint set is generalized polyhedral convex, we establish sharp second-order necessary optimality conditions based on the Fr\'echet second-order subdifferential of the objective function and the second-order tangent set~to the constraint set. Our second-order necessary optimality conditions refine and extend several existing results. We will give three examples to show that the used hypotheses are essential for the new theorems.

\medskip			
The paper organization is as follows. Section~2 presents some basic definitions and auxiliary results. Section~3 is devoted to second-order optimality conditions for constrained optimization problems, where the objective function is $C^2$-smooth. Section 4 studies the possibility of using the Fr\'echet second-order subdifferential in second-order necessary optimality conditions for  constrained optimization problems, where the objective function is $C^1$-smooth.

\section{Preliminaries}
\markboth{\centerline{\it Optimality conditions}}{\centerline{\it D.T.V.~An
		and N.D.~Yen}} \setcounter{equation}{0}
	
	Let $X$ be a Banach space over the reals with the dual and the second dual being denoted, respectively, by $X^*$ and $X^{**}$. As usual, for a subset $\Omega \subset X$, we denote its \textit{convex hull} (resp., \textit{interior}, and \textit{boundary}) by ${\rm conv}\,\Omega$ (resp., ${\rm int}\Omega$, and $\partial\Omega$). One says that a nonempty subset $K \subset X$ is a cone if $tK \subset K$ for any $t>0.$ Following \cite{Luan_Yao_Yen}, we abbreviate the smallest convex cone containing $\Omega$ to cone\,$\Omega$. Then, ${\rm cone}\, \Omega=\{ tx \mid t>0, \,x \in {\rm conv}\,\Omega\}.$ The \textit{polar} to a cone $K\subset X$ is $K^*:=\{x^*\in X^* \mid \langle x^*, x\rangle \le 0, \ \forall x\in K\}$. If $A$ is a matrix, then we denote its transpose by $A^T$. The set of positive integers is denoted by $\mathbb N$.

\medskip
The forthcoming subsection recalls the definitions of contingent cone and second-order tangent set.

\subsection{Second-order tangent sets}

\begin{definition} (See, e.g., \cite[Definition~3.11]{Ruszczynski2006}) A direction $v$ is called \textit{tangent} to the set $C \subset X$ at a point $\ox\in C$ if there exist sequences of points $x_k \in C$ and scalar $\tau_k>0$, $k\in\mathbb N$, such that $\tau_k\rightarrow 0$ and $v=\lim\limits_{k\rightarrow \infty} \big[\tau_k^{-1}(x_k-\ox)\big].$
\end{definition}

The set of all tangent directions to $ C $ at a point $ \ox \in C $, denoted by $ T_{C}(\ox) $, is called the \textit{contingent cone} or the \textit{Bouligand-Severi tangent cone}~\cite[Chapter~1]{Mordukhovich_2006a} to $C$ at $\bar x$. From the definition it follows that $v\in  T_{C}(\ox) $ if and only if there exist a sequence $\{\tau_k\}$ of positive scalars and a sequence of vectors $\{v_k\}$ with $\tau_k\to 0$ and $v_k\to v$ as $k\to\infty$ such that $x_k:=\bar x+\tau_kv_k$ belongs to $C$ for all $k\in\mathbb N$. 

\begin{definition} (See, e.g., \cite[Definition~3.41]{Ruszczynski2006})
A vector $ w $ is called a \textit{second order tangent direction} to a set $ C \subset X$ at a point $ \ox \in C $ and in a tangent direction $v$, if there exist a sequence of scalars $ \tau_{k} > 0 $ and a sequence of points $ x^{k} \in C$ such that $ \tau_{k} \rightarrow 0 $ and
\begin{equation}\label{ttbac2}
w=\lim_{k\rightarrow \infty} \frac{x^{k}-\ox-\tau_k v }{\frac{\tau_k^2}{2}}.
\end{equation} 
\end{definition}

The set of all second-order tangent directions to $C$ at a point $ \ox \in C$ in a tangent direction $v$, denoted by $ T_{C}^{2}(\ox,v)$, is said to be the \textit{second-order tangent set} to $C$ at $\bar x$ in direction $v$. Note that the equality~(\ref{ttbac2}) can be rewritten as
\begin{equation*}x^{k}=\ox+\tau_k v+ \frac{\tau_k^2}{2}w +\mathnormal{o}(\tau_k^2).\end{equation*} Thus, $w\in  T_{C}^{2}(\ox,v) $ if and only if there exist a sequence $\{\tau_k\}$ of positive scalars and a sequence of vectors $\{w_k\}$ with $\tau_k\to 0$ and $w_k\to w$ as $k\to\infty$ such that $x_k:=\bar x+\tau_kv+\frac{\tau_k^2}{2}w_k$ belongs to $C$ for all $k\in\mathbb N$. 

\medskip
In the next subsection, we recall the definition of the generalized polyhedral convex set from~\cite{Bonnans_Shapiro_2000} and establish some auxiliary results.

\subsection{Generalized polyhedral convex sets}

\begin{definition}\label{Def_gpc}{\rm (See \cite[p. 133]{Bonnans_Shapiro_2000} and \cite[Definition~2.1]{Luan_Yao_Yen})} A subset $D\subset X$ is said to be a \textit{generalized polyhedral convex set} if there exist $x_i^*\in X^*$, $\alpha_i \in \mathbb{R},$ $ i=1,2,...,p$, and a closed affine subspace $L \subset X$, such that
	\begin{align}\label{generalized_polyhedra_convex}
	D=\{x\in X \mid x\in L, \ \langle x_i^*, x \rangle \le \alpha_i , \ i=1,2,...,p\}.
	\end{align} 
	If $D$ can be represented in the form of \eqref{generalized_polyhedra_convex} with $L=X$, then we say that it is a \textit{polyhedral convex set}.
\end{definition}

 From Definition \ref{Def_gpc} it follows that every generalized polyhedral convex set is a closed set. If $X$ is finite-dimensional, a subset $D\subset X$ is a generalized polyhedral convex set if and only if it is a polyhedral convex set; see \cite[p.~541]{Luan_Yao_Yen}. 
 
 \medskip
  Let $D$ be given as in \eqref{generalized_polyhedra_convex}. According to \cite[Remark 2.196]{Bonnans_Shapiro_2000}, there exists a continuous surjective linear mapping $A$ from $X$ to a Banach space $Y$ and a vector $y\in Y$ such that $L=\{x\in X \mid Ax=y\}$. Hence,
 \begin{align}\label{generalized_polyhedra_convex_1}
 D=\big\{x\in X \mid Ax=y, \ \langle x_i^*, x \rangle \le \alpha_i , \ i=1,2,...,p\big\}.
 \end{align} 
Put $I=\{1,2,..,p\}$ and, for any $x\in D$, let $I(x):=\{i\in I \mid \langle x_i^*,x \rangle =\alpha_i\}$.
 
  \medskip The first assertion of the next proposition can be found in \cite{Ban_Mordukhovich_Song_2011}. The second assertion extends the result in~\cite[Lemma~3.43]{Ruszczynski2006} to an infinite-dimensional spaces setting.
  
 \begin{proposition}\label{second-order-tangent-polyhedral_1}
 	Let $D$ be a generalized polyhedral convex set in a Banach space~$X$. The contingent cones and the second-order tangent sets to $D$ are represented as follows:
 	\begin{itemize}
 		\item[{\rm (i)}] $T_D(\bar x)=\{v\in X \mid Av=0,\; \langle x_i^*, v \rangle \le 0 ,\; i\in I(\bar x)\}$ for any $\bar x\in D$;
 		\item[{\rm (ii)}] $T^2_D(\ox , v)=T_{T_D(\ox)}(v)$ for any $\bar x\in D$ and $v\in T_D(\bar x)$.
 	\end{itemize}
   \end{proposition}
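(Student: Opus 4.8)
The plan is as follows. Assertion~(i) is classical (and, as noted, available in~\cite{Ban_Mordukhovich_Song_2011}): for ``$\subseteq$'' one takes $v\in T_D(\bar x)$, writes $x_k=\bar x+\tau_kv_k\in D$ with $\tau_k>0$, $\tau_k\to0$, $v_k\to v$, substitutes into~\eqref{generalized_polyhedra_convex_1}, and uses $A\bar x=y$ together with $\langle x_i^*,\bar x\rangle=\alpha_i$ for $i\in I(\bar x)$ to get $Av_k=0$ and $\langle x_i^*,v_k\rangle\le0$ ($i\in I(\bar x)$), then passes to the limit; for ``$\supseteq$'' one checks that any $v$ with $Av=0$ and $\langle x_i^*,v\rangle\le0$ ($i\in I(\bar x)$) satisfies $\bar x+\tau v\in D$ for all small $\tau>0$, since the equality constraint is preserved, the active inequalities remain satisfied, and the finitely many inactive ones stay strict for $\tau$ small. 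So I would only need to prove~(ii), and the strategy is to apply~(i) \emph{twice} in order to reduce both sides of the asserted identity to one and the same explicit polyhedral description.

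Concretely, I would first use~(i) at $\bar x$ to get
\begin{equation*}
K:=T_D(\bar x)=\{u\in X\mid Au=0,\ \langle x_i^*,u\rangle\le0,\ i\in I(\bar x)\}.
\end{equation*}
Since $A$ is continuous and linear, $\ker A$ is closed, so $K$ is again a generalized polyhedral convex set, represented as in~\eqref{generalized_polyhedra_convex_1} with affine subspace $\ker A$, functionals $x_i^*$ ($i\in I(\bar x)$) and right-hand sides $0$. Writing $J:=\{i\in I(\bar x)\mid\langle x_i^*,v\rangle=0\}$ for the active index set of $K$ at $v\in K$, applying~(i) once more, now to $K$ at $v$, yields
\begin{equation*}
T_{T_D(\bar x)}(v)=T_K(v)=\{w\in X\mid Aw=0,\ \langle x_i^*,w\rangle\le0,\ i\in J\}.
\end{equation*}
Thus~(ii) reduces to showing that $T^2_D(\bar x,v)$ coincides with this set.

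For ``$T^2_D(\bar x,v)\subseteq T_K(v)$'' I would take $w\in T^2_D(\bar x,v)$, use the sequential description recalled after~\eqref{ttbac2} to get $\tau_k>0$, $\tau_k\to0$, $w_k\to w$ with $x_k:=\bar x+\tfrac{\tau_k^2}{2}w_k+\tau_kv\in D$, substitute into~\eqref{generalized_polyhedra_convex_1}, and exploit $A\bar x=y$, $Av=0$ (because $v\in K$) and $\langle x_i^*,\bar x\rangle=\alpha_i$ for $i\in I(\bar x)$: the equality constraint forces $Aw_k=0$, hence $Aw=0$, while for $i\in I(\bar x)$ it follows that $\langle x_i^*,v\rangle+\tfrac{\tau_k}{2}\langle x_i^*,w_k\rangle\le0$, which for $i\in J$ gives $\langle x_i^*,w_k\rangle\le0$ and, in the limit, $\langle x_i^*,w\rangle\le0$. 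For the reverse inclusion I would take $w\in T_K(v)$, put $x_k:=\bar x+\tfrac1k v+\tfrac{1}{2k^2}w$, and verify directly that $x_k\in D$ for all large $k$: $Ax_k=y$ holds identically; for $i\in J$ one has $\langle x_i^*,x_k\rangle=\alpha_i+\tfrac{1}{2k^2}\langle x_i^*,w\rangle\le\alpha_i$; for $i\in I(\bar x)\setminus J$ the fixed strictly negative number $\langle x_i^*,v\rangle$ dominates the $O(1/k)$ correction, so $\langle x_i^*,x_k\rangle<\alpha_i$ for $k$ large; and for $i\in I\setminus I(\bar x)$ one has $\langle x_i^*,x_k\rangle\to\langle x_i^*,\bar x\rangle<\alpha_i$. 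Taking a tail where all the finitely many constraints hold and reindexing gives the sequences ($w_k\equiv w$) required by the sequential description of $T^2_D(\bar x,v)$, so $w\in T^2_D(\bar x,v)$.

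I do not expect a genuine obstacle: the argument is careful bookkeeping of the first- and second-order terms along the curve $k\mapsto\bar x+\tfrac1kv+\tfrac{1}{2k^2}w$. The two points deserving attention are (a) the legitimacy of reusing~(i) for $K=T_D(\bar x)$, which hinges only on $\ker A$ being closed, guaranteed by continuity of $A$; and (b) the fact that the threshold index beyond which $x_k\in D$ must be chosen uniformly over the finitely many inequality constraints — this is precisely where generalized polyhedrality (i.e.\ \emph{finitely} many inequalities) enters.
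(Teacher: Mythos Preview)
Your proposal is correct and follows essentially the same route as the paper: both prove~(i) by the standard two-inclusion argument, then apply~(i) twice (once at $\bar x$ to obtain $T_D(\bar x)$, once at $v$ to obtain $T_{T_D(\bar x)}(v)$) and verify the two inclusions for~(ii) by substituting the parabolic sequence $\bar x+\tau_k v+\tfrac{\tau_k^2}{2}w_k$ into~\eqref{generalized_polyhedra_convex_1} and splitting the index set into $J$, $I(\bar x)\setminus J$, and $I\setminus I(\bar x)$. The only cosmetic differences are notational (your $J$ is the paper's $I^0(v)$) and your explicit choice $\tau_k=1/k$ in the reverse inclusion, where the paper uses a generic small $t>0$.
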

 \begin{proof}
 	(i) To show that
 	\begin{align}
 	\label{inclusion1}T_D(\bar x)\subset \{v\in X \mid Av=0, \ \langle x_i^*, v \rangle \le 0 , \ i\in I(\bar x)\},
 	\end{align}
 take any $v \in T_D(\ox)$. Let $\tau_{k} \downarrow 0$ and $v_k \rightarrow v$ be such that $\ox + \tau_{k} v_k \in D$ for $k \in \mathbb{N}$. Then, we have 
 	$A(\ox +\tau_{k} v_k)=y$ and $\langle x_i^*, \ox +\tau_{k} v_k \rangle \le \alpha_i$ for all $i\in I.$ This implies that
 	\begin{align}\label{formula1}
 	A(\tau_{k} v_k)=0 \ \; \mbox{and} \ \langle x_i^*,\tau_{k} v_k \rangle \le 0\ \; (\forall i\in I(\ox),\ \forall k\in \mathbb{N}).
 	\end{align}
 From \eqref{formula1} we have
 	\begin{align}\label{formula1a}
 	A(v_k)=0\ \; \mbox{and} \ \; \langle x_i^*, v_k \rangle \le 0\ \; (\forall i\in I(\ox),\ \forall k\in \mathbb{N}).
 	\end{align}
 	Letting $k\rightarrow \infty$, from~\eqref{formula1a} we get
 	$A(v)=0$ and $\langle x_i^*, v \rangle \le 0$ for any $i\in I(\ox)$.
 	In other words, $v$ belongs to the right-hand-side of~\eqref{inclusion1}. So, the inclusion~\eqref{inclusion1} is valid. To prove the opposite inclusion, pick any $v\in X$ satisfying $Av=0$ and $\langle x_i^* , v \rangle \le 0$ for $i\in I(\ox).$
 	Since $\ox \in D$, one has $A\ox=y$, $\langle x_i^* ,  \ox \rangle =\alpha_i$ for $i\in I(\ox)$, and $\langle x_i^*, \ox \rangle < \alpha_i$ for $i\in I\setminus I(\ox) .$ Hence, for all $t>0$ small enough, one has $A(\ox+t v)=y, \ \langle x_i^* ,  \ox+tv \rangle \le\alpha_i$ for $i\in I(\ox)$ and $\langle x_i^*, \ox+tv  \rangle < \alpha_i$ for $i\in I\setminus I(\ox) .$ So, $\ox +tv \in D$ for all $t>0$ small enough. It follows that $v\in T_D(\ox)$. Thus, assertion (i) is justified.
 			
 			 (ii) Fix any $\bar x\in D$ and $v\in T_D(\bar x)$. By assertion (i), $Av=0$ and $\langle x_i^*, v \rangle \le 0$ for all $i\in I(\bar x)$. Moreover, since 
 			 \begin{align}\label{tangent_new} T_D(\bar x)=\{u\in X \mid Au=0,\; \langle x_i^*, u \rangle \le 0 ,\; i\in I(\bar x)\},\end{align} applying the same assertion we can compute the contingent cone to the generalized polyhedral convex set $T_D(\ox)$ at $v$ as follows
 			\begin{align}\label{tangent_tangent}
 			T_{T_D(\ox)}(v)=\big\{u\in X \mid Au=0, \ \langle x_i^*, u\rangle \le 0, \ i\in I^0(v)\big\},
 			\end{align}
 			where $ I^0(v) :=\{i\in I(\ox) \mid \langle x_i^*, v \rangle =0\}$.
 		On one hand, for any fixed vector $w\in T^2_D(\ox , v)$, we can find sequences $\tau_{k} \downarrow 0$ and $w_k\rightarrow w$ such that $$\ox + \tau_{k}v+\frac{\tau^2_{k}}{2} w_k \in D\quad (\forall k\in \mathbb{N}).$$ By~\eqref{generalized_polyhedra_convex_1}, one has
 			$
 			A(\ox +\tau_{k} v+\frac{\tau^2_{k}}{2} w_k )=y $ and $ \langle x_i^*, \ox+\tau_{k}v+ \frac{\tau^2
 				_{k}}{2} w_k\rangle \le \alpha_i, \ i\in I.
 		$
 		As $\ox \in D$ and $v\in T_D(\ox)$, this yields
 			\begin{align}\label{formula1b}
 			A\Big(\frac{\tau^2_{k}}{2} w_k \Big)=0\ \mbox{and}\ \big\langle x_i^*, \frac{\tau^2
 				_{k}}{2} w_k\big\rangle \le 0,\ \forall i\in I^0(v).
 			\end{align}
 			Since $\tau_{k} >0$, \eqref{formula1b} implies that
 		$A\left(w_k \right)=0$  and $ \langle x_i^*, w_k\rangle \le 0$ for all $i\in I^0(v).$
 			Letting $k\rightarrow \infty$, we obtain $A\left(w\right)=0$  and $ \langle x_i^*, w\rangle \le 0$ for all $i\in I^0(v).$ Therefore, by \eqref{tangent_tangent} we can assert that $w\in T_{T_D(\ox)}(v).$
 	On the other hand, taking any $w\in T_{T_D(\ox)}(v)$, from~\eqref{tangent_tangent} one gets $Aw=0$ and $\langle x_i^*, w \rangle \le 0$ for all $i\in I^0(v).$ By the definition of  $I^0(v)$, we have $\langle x_i^*, v \rangle = 0$ for any $i\in I^0(v)$ and  $\langle x_i^*, v \rangle < 0$ for any $i\in I(\ox) \setminus I^0(v)$. Moreover, since $\ox \in D$, it holds that $A\ox=y$, $\langle x_i^* ,  \ox \rangle =\alpha_i$ for $i\in I(\ox)$, and $\langle x_i^*, \ox \rangle < \alpha_i$ for $i\in I\setminus I(\ox).$ So, for every $t>0$ sufficiently small, one has $A(\ox+t v+\frac{t^2}{2}w)=y, \ \langle x_i^* ,  \ox+tv +\frac{t^2}{2}w \rangle \le\alpha_i$ for all $i\in I^0(v)$ and $\langle x_i^*, \ox+tv +\frac{t^2}{2}w  \rangle < \alpha_i$ for all $i\in I\setminus I^0(v).$ This yields $\ox+tv +\frac{t^2}{2}w \in D$ for every $t>0$ sufficiently small. Hence, $w\in T^2_D(\ox , v).$
 	We have thus proved the equality stated in assertion (ii). $\hfill\Box$
 \end{proof}

\begin{remark}\label{remark_1} {\rm  If $D\subset X$ is a generalized polyhedral convex set then, for any $\bar x\in D$ and $v\in T_D(\bar x)$, one has $T_D(\bar x)\subset T^2_D(\ox,v)$, and the inclusion can be strict. We can justify this observation by representing $D$ in the form~\eqref{generalized_polyhedra_convex_1} and applying some formulas established in the proof of Proposition~\ref{second-order-tangent-polyhedral_1}. Indeed, since $I^0(v)\subset I(\ox)$, from~\eqref{tangent_new}, \eqref{tangent_tangent}, and the equality $T^2_D(\ox , v)=T_{T_D(\ox)}(v)$, one can deduce that $T_D(\bar x)\subset T^2_D(\ox , v)$. When $I^0(v)$ is a proper subset of $I(\ox)$, the last  inclusion can be strict. To have an example, one can choose $$D=\big\{x=(x_1,x_2)\in\mathbb R^2\mid x_1\geq 0, x_2\geq 0\big\},$$ $\bar x=(0,0)$, $v=(1,0)$, then use~\eqref{tangent_tangent} and the equality $T^2_D(\ox , v)=T_{T_D(\ox)}(v)$ to show that $T^2_D(\ox , v)=\big\{w=(w_1,w_2)\in\mathbb R^2\mid w_2\geq 0\big\}$, while $$T_D(\bar x)=\big\{u=(u_1,u_2)\in\mathbb R^2\mid u_1\geq 0, u_2\geq 0\big\}.$$}
\end{remark}

As a preparation for getting optimality conditions based on the Fr\'echet second-order subdifferential, we now recall the later concept and some related constructions.
 
\subsection{Constructions from generalized differentiation}

\begin{definition} {\rm (See \cite[p. 4 ]{Mordukhovich_2006a})}
	\rm
	Let $\Omega$ be a nonempty subset of $X.$ The \textit{Fr\'echet normal cone} to $\Omega$ at $x\in\Omega$ is given by
		\begin{align*}
	\widehat N_\Omega(x):=\Big\{ x^*\in X^*\mid \limsup\limits_{u \xrightarrow{\Omega}x} \dfrac{\langle x^*, u-x \rangle}{\|u-x\|} \leq 0 \Big\},
	\end{align*}
where $u \xrightarrow{\Omega} x$ means that $u \rightarrow x$ and $ u\in \Omega$.
	 If $x \not\in \Omega$, we put $\widehat N_\Omega(x)=\emptyset$.
\end{definition}

If $\Omega$ is convex, one has 
$$\widehat N_\Omega(x)=N_\Omega(x):=\big\{x^* \in X^* \mid \langle x^*, u-x \rangle \le 0, \ \forall u\in\Omega\big\},$$ i.e., $\widehat N_\Omega(x)$ coincides with the normal cone in the sense of convex analysis. In that case,  $ [T_\Omega(x)]^*=N_\Omega(x)$ and $[N_\Omega(x)]^*= T_\Omega(x)$, where $$[N_\Omega(x)]^*:=\{x\in X\mid \langle x^*,x \rangle \le 0,\ \forall x^*\in N_\Omega(x)\}.$$

\medskip
Given  a set-valued map  $F: X \rightrightarrows Y$ between Banach spaces, one defines  the \textit{graph} of $F$ by ${\rm{gph}}\, F=\{ (x,y) \in X \times Y \mid y \in F(x)\}.$ The product space $X\times Y$ is equipped with the norm $\|(x,y)\|:=\|x\|+\|y\|$.
	
	\begin{definition}{\rm (See \cite[p.~40]{Mordukhovich_2006a})
		 The \textit{Fr\'{e}chet coderivative} of $F$ at $\bar z=(\bar x, \bar y)$ in ${\rm{gph}}\, F$ is the multifunction $\widehat D^* F(\bar x, \bar y): Y^* \rightrightarrows X^*$ given by
			\begin{align*}
			\widehat D^* F(\bar z)(y^*)=\!\left\{x^* \in X^* \mid (x^*, -y^*) \!\in\! \widehat N_{{\rm gph}\, F}(\bar z)\right\}, \, \forall y^* \in Y^*.
			\end{align*}
			If $(\bar x, \bar y) \notin {\rm{gph}}\, F$, one puts $\widehat D^* F(\bar z)(y^*)=\emptyset$ for any $y^* \in Y^*$.
		}
	\end{definition}

If $F(x)=\{f(x)\}$ for all $x\in X$, where $f:X\to Y$ is a single-valued map, we will write $\widehat{D} f(\ox)(y^*)$ instead of $\widehat D^* F(\bar x,f(\bar x))(y^*)$.
 
\begin{proposition} {\rm (See \cite[Theorem 1.38]{Mordukhovich_2006a})} \label{coderivative_singleton}
	Let $f: X \rightarrow Y$ be a Fr\'echet differentiable function at $\ox$. Then $ \widehat{D} f(\ox)(y^*)=\{\nabla f(\ox) ^* y^*\}$ for every $y^*\in Y^*,$
	where $\nabla f(\ox) ^*$ is the adjoint operator of $\nabla f(\ox).$
\end{proposition}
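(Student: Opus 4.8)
The plan is to unfold the definitions of the Fr\'echet coderivative and the Fr\'echet normal cone and then to exploit the first-order expansion supplied by Fr\'echet differentiability of $f$ at $\ox$. Set $\bar y:=f(\ox)$ and $\bar z:=(\ox,\bar y)$. By definition, $x^*\in\widehat D f(\ox)(y^*)$ if and only if $(x^*,-y^*)\in\widehat N_{\gph f}(\bar z)$, that is,
\begin{align*}
\limsup_{(u,v)\xrightarrow{\gph f}\bar z}\ \frac{\langle x^*,u-\ox\rangle-\langle y^*,v-\bar y\rangle}{\|u-\ox\|+\|v-\bar y\|}\le 0.
\end{align*}
Since every point of $\gph f$ close to $\bar z$ is of the form $(u,f(u))$ and, $f$ being continuous at $\ox$, the convergence $(u,f(u))\to\bar z$ is equivalent to $u\to\ox$, this condition can be rewritten with $v=f(u)$ and the limit taken over $u\to\ox$.

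First I would substitute $f(u)=\bar y+\nabla f(\ox)(u-\ox)+r(u)$, where $\|r(u)\|/\|u-\ox\|\to 0$ as $u\to\ox$. Using $\langle y^*,\nabla f(\ox)(u-\ox)\rangle=\langle\nabla f(\ox)^*y^*,u-\ox\rangle$, the numerator becomes $\langle x^*-\nabla f(\ox)^*y^*,u-\ox\rangle-\langle y^*,r(u)\rangle$. For the denominator, the elementary estimates $\|u-\ox\|\le\|u-\ox\|+\|f(u)-\bar y\|\le\big(1+\|\nabla f(\ox)\|+\varepsilon\big)\|u-\ox\|$, valid for $u$ sufficiently close to $\ox$, show that it is comparable with $\|u-\ox\|$. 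Hence the quotient of $\langle y^*,r(u)\rangle$ by the denominator tends to $0$, and the coderivative inclusion is equivalent to
\begin{align*}
\limsup_{u\to\ox}\ \frac{\langle x^*-\nabla f(\ox)^*y^*,u-\ox\rangle}{\|u-\ox\|}\le 0.
\end{align*}

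Finally, putting $z^*:=x^*-\nabla f(\ox)^*y^*$, the last inequality says exactly that $z^*\in\widehat N_X(\ox)$, which is the zero cone because $X$ is the whole space: testing along $u=\ox+th$ with $t\downarrow 0$ and an arbitrary unit vector $h$ yields $\langle z^*,h\rangle\le 0$ for all $h$ with $\|h\|=1$, so $z^*=0$, i.e. $x^*=\nabla f(\ox)^*y^*$. Conversely, if $x^*=\nabla f(\ox)^*y^*$ then $z^*=0$ and the displayed $\limsup$ equals $0$, so $x^*\in\widehat D f(\ox)(y^*)$; this gives $\widehat D f(\ox)(y^*)=\{\nabla f(\ox)^*y^*\}$. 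I expect the only mildly delicate points to be the two-sided comparison of the denominator $\|u-\ox\|+\|f(u)-\bar y\|$ with $\|u-\ox\|$ and the observation that approaching $\bar z$ within $\gph f$ is the same as letting $u\to\ox$; once these are in place, the remainder of the argument is a routine manipulation of the Taylor remainder term.
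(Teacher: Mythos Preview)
Your argument is correct: unfolding the Fr\'echet normal cone to $\gph f$ at $(\ox,f(\ox))$, substituting the first-order expansion $f(u)=f(\ox)+\nabla f(\ox)(u-\ox)+r(u)$, and using the two-sided bound $\|u-\ox\|\le \|u-\ox\|+\|f(u)-f(\ox)\|\le C\|u-\ox\|$ to replace the denominator is exactly the right line, and the final identification $z^*\in\widehat N_X(\ox)=\{0\}$ is clean. The only place that deserves a line more of justification is the ``comparable denominators'' step: the equivalence of the two $\limsup\le 0$ conditions follows because whenever the numerator is positive one has $\langle z^*,u-\ox\rangle/D\ge \langle z^*,u-\ox\rangle/(C\|u-\ox\|)$ and $\langle z^*,u-\ox\rangle/\|u-\ox\|\ge \langle z^*,u-\ox\rangle/D$, so a positive $\limsup$ on one side forces a positive $\limsup$ on the other; you implicitly use this, and it would not hurt to say it explicitly.

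As for comparison with the paper: the paper does not give a proof of this proposition at all---it is simply quoted from \cite[Theorem~1.38]{Mordukhovich_2006a}. Your write-up is the standard direct proof (and is essentially the one in Mordukhovich's book), so there is nothing further to contrast.
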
 

	
	Consider a function $f: X\rightarrow \overline{\Bbb{R}}$, where 
	$\overline{\Bbb{R}}=[- \infty, + \infty]$ is the extended real line. The \textit{epigraph} of $f$ is given by $ {\rm{epi}}\, f=\{ (x, \alpha) \in X \times \Bbb{R} \mid \alpha \ge f(x)\}.$ 
	
	\begin{definition} {\rm (See \cite[Chapter~1]{Mordukhovich_2006a}})
		\rm Let $f: X\rightarrow \overline{\mathbb{R}}$ be a  function defined on a Banach space.
		Suppose that $\bar {x} \in X$ and $ |f(\bar {x})| < \infty.$ One calls the set
		\begin{align*}
			\widehat \partial f(\bar x):=\left\{x^* \in X^*\mid (x^*, -1) \in \widehat N_{\epi f}( (\bar {x}, f(\bar{ x})))\right\}
		\end{align*}
	the \textit{Fr\'{e}chet subdifferential} of $f$ at $\bar {x}$. If $ |f(\bar x)| = \infty$, one puts $\widehat\partial f(\bar x)=\emptyset$.
	\end{definition}

	\begin{definition}  {\rm (See \cite[p.~122]{Mordukhovich_2006a})} {\rm Let $f:X\rightarrow \overline{\mathbb{R}}$
			be a function with a finite value at $\bar{x}.$ For any $\bar y\in \widehat\partial f (\bar x)$, the map $\widehat\partial^2 f(\bar x,\bar y):
		X^{**}\rightrightarrows X^* $ with the values 
			\begin{align*}
				\widehat{\partial}^2 f(\bar x,\bar y)(u):=(\widehat{D}^* \widehat{\partial}f)(\bar x,\bar y)(u)\quad (u \in X^{**})
			\end{align*} is said to be the \textit{Fr\'echet
				second-order subdifferential} of $f$ at $\bar x$ relative to
			$\bar y.$ }
	\end{definition}  

If $\widehat\partial f (\bar x)$ is a singleton, the symbol $\bar y$ in the notation $\widehat{\partial}^2 f(\bar x,\bar y)(u)$ will be omitted.
If $f:X\rightarrow \overline{\mathbb{R}}$ is Fr\'echet differentiable in an open neighborhood of $\bar x$, then $\widehat\partial  f (\bar x)=\{\nabla f(\ox)\}$.  Moreover, if the operator $\nabla f: X \rightarrow X^*$ is Fr\'echet  differentiable at $\ox$ with the second-order derivative $\nabla^2 f(\ox):=\nabla (\nabla f(\cdot))(\ox)$, then $\nabla^2 f(\ox)$ maps $X^{**}$ to $X^*$. By Proposition~\ref{coderivative_singleton}, $\widehat{\partial}^2 f(\bar x)(u)=\{\nabla^2 f(\ox)^* u\}$ for every $u \in X^{**}$. When $X$ is finite-dimensional and $f$ is $C^2$-smooth in an open neighborhood of $\bar x$, then $\nabla^2 f(\ox)$ is identified with the Hessian matrix of $f$ at $\bar x$ for which one has  $\nabla^2 f(\ox)^*=\nabla^2 f(\ox) $ by Clairaut's rule. 

\medskip
The forthcoming subsection presents two lemmas which will be used repeatedly in the sequel.

\subsection{Auxiliary results}

\begin{lemma}
	Let $C=\big\{x\in X \mid Ax=y, \ \langle x_i^*, x \rangle \le \alpha_i , \ i=1,2,...,p\big\},$ where $A$, $y$, $x_i^*,$ and $\alpha_i$ for $i=1,\dots,p$ are the same as in \eqref{generalized_polyhedra_convex_1}, be a generalized polyhedral convex set. For any $v\in T_C(\ox)$ with $-v \in T_C(\ox)$, it holds that \begin{align}\label{equality}
	T^2_C(\ox , -v)=T^2_C(\ox , v).
	\end{align}
\end{lemma}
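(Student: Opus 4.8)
The plan is to reduce everything to the explicit descriptions of the second-order tangent set obtained in the proof of Proposition~\ref{second-order-tangent-polyhedral_1}. Recall from that proof (see formulas~\eqref{tangent_new}, \eqref{tangent_tangent}, and assertion~(ii)) that, for any $u\in T_C(\ox)$,
\begin{align*}
T^2_C(\ox,u)=T_{T_C(\ox)}(u)=\big\{z\in X\mid Az=0,\ \langle x_i^*,z\rangle\le 0,\ i\in I^0(u)\big\},
\end{align*}
where $I^0(u)=\{i\in I(\ox)\mid \langle x_i^*,u\rangle=0\}$. Thus $T^2_C(\ox,v)$ and $T^2_C(\ox,-v)$ differ only possibly through the index sets $I^0(v)$ and $I^0(-v)$, and the whole statement will follow once we show $I^0(v)=I^0(-v)$.

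Next I would use the hypothesis that both $v$ and $-v$ lie in $T_C(\ox)$. By assertion~(i) of Proposition~\ref{second-order-tangent-polyhedral_1}, $v\in T_C(\ox)$ gives $Av=0$ and $\langle x_i^*,v\rangle\le 0$ for all $i\in I(\ox)$, while $-v\in T_C(\ox)$ gives $\langle x_i^*,-v\rangle\le 0$, i.e. $\langle x_i^*,v\rangle\ge 0$, for all $i\in I(\ox)$. Combining the two inequalities yields $\langle x_i^*,v\rangle=0$ for every $i\in I(\ox)$, and hence also $\langle x_i^*,-v\rangle=0$ for every $i\in I(\ox)$. Consequently $I^0(v)=I^0(-v)=I(\ox)$.

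Substituting $I^0(v)=I^0(-v)$ into the displayed formula above immediately gives $T^2_C(\ox,-v)=T^2_C(\ox,v)$, which is \eqref{equality}. (In fact this common set equals $\{z\in X\mid Az=0,\ \langle x_i^*,z\rangle\le 0,\ i\in I(\ox)\}=T_C(\ox)$, so under the stated hypothesis the second-order tangent set in direction $\pm v$ reduces to the contingent cone.) I do not anticipate a genuine obstacle here: the only point requiring care is to quote the index-set formula~\eqref{tangent_tangent} from the proof of Proposition~\ref{second-order-tangent-polyhedral_1} correctly and to note that the symmetry condition $\pm v\in T_C(\ox)$ is exactly what forces all active functionals to vanish on $v$.
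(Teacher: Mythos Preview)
Your proof is correct and shares the paper's central observation: the hypothesis $\pm v\in T_C(\ox)$ forces $\langle x_i^*,v\rangle=0$ for every $i\in I(\ox)$, whence $I^0(v)=I^0(-v)=I(\ox)$. The only difference lies in the final step. You plug this equality of index sets directly into the primal description~\eqref{tangent_tangent} of $T_{T_C(\ox)}(\cdot)$ from the proof of Proposition~\ref{second-order-tangent-polyhedral_1}, which is the most economical route. The paper instead passes to the dual side, writing $T^2_C(\ox,\pm v)=[N_{T_C(\ox)}(\pm v)]^*$ and invoking an external normal-cone formula (\cite[Proposition~4.2]{Luan_Yao_Yen}) to compute $N_{T_C(\ox)}(\pm v)$ in terms of $I^0(\pm v)$; equality of the index sets then gives equality of the normal cones and hence of their polars. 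Your argument is more self-contained and avoids the detour through normal cones, while the paper's version makes the dual structure explicit; substantively the two are the same proof.
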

\begin{proof}
	By Proposition~\ref{second-order-tangent-polyhedral_1}, $T_C^2(\ox ,v)=T_{T_C(\ox)}(v)$ and $T_C^2(\ox ,-v)=T_{T_C(\ox)}(-v).$ Moreover, one has $ T_{T_C(\ox)}(v)=[N_{T_C(\ox)}(v)]^*$ and $ T_{T_C(\ox)}(-v)=[N_{T_C(\ox)}(-v)]^*.$ Therefore, 
	\begin{align}\label{equalities}
     T_C^2(\ox ,v)=[N_{T_C(\ox)}(v)]^*\quad {\rm and}\quad T_C^2(\ox ,-v)=[N_{T_C(\ox)}(-v)]^*.
	\end{align}
	 On one hand, by~\cite[Proposition~4.2]{Luan_Yao_Yen}, $N_C(\ox) ={\rm cone}\,\big\{x_i^* \mid i \in I(\ox)\big\}+({\rm ker}\, A)^\intercal,$ where $I(\ox)=\{i\in I \mid \langle x_i^*, \ox \rangle =\alpha_i\}$ and $$({\rm ker}\, A)^\intercal =\{x^*\in X^* \mid \langle x^*, x \rangle =0, \ \forall x \in {\rm ker}\, A\}.$$
	On the other hand, according to Proposition~\ref{second-order-tangent-polyhedral_1},
	\begin{align*}
	T_C(\ox)=\{v\in X \mid Av=0, \ \langle x_i^*, v \rangle \le 0 , \ i\in I(\bar x)\}.
	\end{align*}
	So, $v\in T_C(\ox)$ and $-v\in T_C(\ox)$ if and only if $Av=0,$ $\langle x_i^*, v \rangle \le 0,$ and $ \langle x_i^*, -v \rangle \le 0$ for all $i\in I(\ox).$
	This means that $Av=0$ and $\langle x_i^*, v \rangle =0$ for all $i\in I(\ox).$ Putting $I^0(u) =\{i\in I(\ox) \mid \langle x_i^*, u \rangle =0\}$ for every  $u\in T_C(\ox),$ we see that $I^0(v)=I(\ox)=I^0(-v)$. So, thanks to \cite[Proposition~4.2]{Luan_Yao_Yen}, we have
	\begin{align*}
     N_{T_C(\ox)}(v) ={\rm cone}\,\{x_i^* \mid i \in I^0(v)\}+({\rm ker}\, A)^\intercal
	\end{align*}
	and $N_{T_C(\ox)}(-v) ={\rm cone}\,\{x_i^* \mid i \in I^0(v)\}+({\rm ker}\, A)^\intercal$. Thus, by~\eqref{equalities} we get
	\begin{align*}
	T_C^2(\ox , -v)= [N_{T_C(\ox)}(-v)]^*
	=[N_{T_C(\ox)}(v)]^*
	=	T_C^2(\ox , v).
	\end{align*} This justifies~\eqref{equality} and completes the proof.
	$\hfill\Box$
\end{proof}

Consider the problem 
\begin{equation*}
\min \{f(x) \mid x \in C\}, \tag{\rm P}
\end{equation*}
where $ f: X\rightarrow \mathbb{R} $ is a Fr\'echet differentiable function and $C$ is a nonempty subset of $X$. 

\begin{lemma}\label{Lemma1}
	Suppose that $\ox$ is a local minimum of {\normalfont(P)}, where $C$ is a generalized polyhedral convex set. Then,  $\langle \nabla f(\bar x) , v \rangle \ge 0$ for every $v \in T_C(\bar x).$ 
	Moreover, if $v\in T_C(\bar x)$ is such that $\langle \nabla f(\bar x), v\rangle =0$, then
	\begin{align}\label{formula_new}
	\langle \nabla f(\bar x), w \rangle \ge 0 \ \; \mbox{for all}\ \, w \in T_C^2(\bar x, v).
	\end{align}
\end{lemma}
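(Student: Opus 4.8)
The plan is to treat the two assertions in turn and---crucially---to deduce the second-order statement from the first-order one via the polyhedral description of $C$, rather than by a Taylor expansion. A direct expansion of $f$ along a curve $\bar x+\tau_k v+\tfrac{\tau_k^2}{2}w_k\in C$ is of no help here: for a merely $C^1$ function the remainder is only $o(\tau_k)$, which swamps the $\tfrac{\tau_k^2}{2}\langle\nabla f(\bar x),w\rangle$-term one would like to sign.

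\emph{First assertion.} Take $v\in T_C(\bar x)$. Writing $C$ in the form \eqref{generalized_polyhedra_convex_1}, Proposition~\ref{second-order-tangent-polyhedral_1}(i) gives $Av=0$ and $\langle x_i^*,v\rangle\le 0$ for $i\in I(\bar x)$, and then (exactly as in the last part of the proof of that proposition) $\bar x+tv\in C$ for all $t>0$ small enough. Since $\bar x$ is a local minimum, $f(\bar x+tv)\ge f(\bar x)$ for such $t$; Fr\'echet differentiability gives $f(\bar x+tv)=f(\bar x)+t\langle\nabla f(\bar x),v\rangle+o(t)$, so dividing by $t$ and letting $t\downarrow 0$ yields $\langle\nabla f(\bar x),v\rangle\ge 0$.

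\emph{Second assertion.} Fix $v\in T_C(\bar x)$ with $\langle\nabla f(\bar x),v\rangle=0$ and $w\in T^2_C(\bar x,v)$. By Proposition~\ref{second-order-tangent-polyhedral_1}(ii), $w\in T_{T_C(\bar x)}(v)$; since $T_C(\bar x)$ is itself generalized polyhedral convex (described by the homogeneous system $Au=0$, $\langle x_i^*,u\rangle\le 0$, $i\in I(\bar x)$), Proposition~\ref{second-order-tangent-polyhedral_1}(i) applied to it gives $Aw=0$ and $\langle x_i^*,w\rangle\le 0$ for every $i\in I^0(v):=\{i\in I(\bar x)\mid\langle x_i^*,v\rangle=0\}$. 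The key step is then to verify that $v+tw\in T_C(\bar x)$ for all $t>0$ sufficiently small: one has $A(v+tw)=0$; for $i\in I^0(v)$, $\langle x_i^*,v+tw\rangle=t\langle x_i^*,w\rangle\le 0$; and for $i\in I(\bar x)\setminus I^0(v)$, $\langle x_i^*,v\rangle<0$, so $\langle x_i^*,v+tw\rangle<0$ once $t$ is small, irrespective of the sign of $\langle x_i^*,w\rangle$. Hence $v+tw$ meets the description of $T_C(\bar x)$ from Proposition~\ref{second-order-tangent-polyhedral_1}(i). Applying the first assertion to the direction $v+tw$ gives $\langle\nabla f(\bar x),v+tw\rangle\ge 0$, i.e. $t\langle\nabla f(\bar x),w\rangle\ge 0$ because $\langle\nabla f(\bar x),v\rangle=0$; dividing by $t>0$ proves \eqref{formula_new}.

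The only genuinely delicate point is the one signalled at the outset: the argument must convert second-order tangency into a one-parameter family of honest first-order tangent directions $v+tw\in T_C(\bar x)$, a maneuver that is available precisely because $C$---hence $T_C(\bar x)$---is (generalized) polyhedral convex, and which would fail for a general closed convex constraint set.
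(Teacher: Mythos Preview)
Your proof is correct and follows essentially the same route as the paper: both reduce \eqref{formula_new} to the first-order condition by showing that $w\in T^2_C(\bar x,v)=T_{T_C(\bar x)}(v)$ implies $v+tw\in T_C(\bar x)$ for small $t>0$ (equivalently, $w\in\mathrm{cone}\,(T_C(\bar x)-v)$), and then evaluating $\langle\nabla f(\bar x),\cdot\rangle$ on $v+tw$. The only cosmetic difference is that the paper quotes the identity $T_{T_C(\bar x)}(v)=\mathrm{cone}\,(T_C(\bar x)-v)$ from \cite[Proposition~2.22]{Luan_Yao_Yen}, whereas you verify the needed inclusion directly from the inequality description of $T_C(\bar x)$.
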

\noindent\proof\ 
The first assertion is a special case of the result recalled in Theorem~\ref{necessary_condition} below. Let $v\in T_C(\bar x)$ be such that $\langle \nabla f(\bar x), v\rangle =0$. To get~\eqref{formula_new}, fix any 
 $w\in T_C^2(\bar x, v)$. By Proposition \ref{second-order-tangent-polyhedral_1} we have $T_C^2(\bar x, v)=T_{T_C(\bar x)}(v).$ Moreover, since $C$ is a generalized polyhedral convex set, $T_C(\ox)$ is a generalized polyhedral convex cone by~\cite[Proposition~2.22]{Luan_Yao_Yen}. So, applying~\cite[Proposition~2.22]{Luan_Yao_Yen}, one has
$T_{T_C(\bar x)}(v)={\rm cone}\,(T_C(\ox)-v).$
Thus, the representation $w=\lambda(v'-v)$ holds for some $v'\in T_C(\ox)$ and $\lambda >0$. Therefore, $$\langle \nabla f(\bar x), w \rangle = \lambda \langle \nabla f(\bar x),v'\rangle - \lambda \langle \nabla f(\bar x),v\rangle.$$ As $\langle \nabla f(\bar x), v'\rangle \ge0$ for any $v'\in T_C(\ox)$ by the first assertion and $\langle \nabla f(\bar x), v\rangle =0$ by our assumption, this implies~\eqref{formula_new}.
$\hfill\Box$

\section{Problems in the classical setting}
\markboth{\centerline{\it Optimality conditions}}{\centerline{\it D.T.V.~An
		and N.D.~Yen}} \setcounter{equation}{0}
In this section, we focus on second-order optimality conditions for problem (P) under the assumption that \textit{$f$ is twice continuously differentiable} on $X$ (i.e., $f$ is a $C^2$-smooth function). By abuse of terminology, we call this (P) \textit{a problem in the classical setting}.

\medskip
The next first-order and second-order necessary optimality conditions are known results. The proofs in a finite-dimensional setting given in \cite[p.~114 and p.~144]{Ruszczynski2006} are also valid for the infinite-dimensional setting adopted in the present paper. For the first statement, it suffices to assume that $f$ is Fr\'echet differentiable at $\bar x$.
 
\begin{theorem}\label{necessary_condition} {\rm (See, e.g., \cite[Theorem 3.24]{Ruszczynski2006})} If $\bar x$ is a local minimum of {\normalfont(P)}, then
	\begin{align}\label{Fermat_Rule}
		\langle \nabla f(\ox), v   \rangle \ge 0\ \; \mbox{for all}\ \, v\in T_C(\ox).
	\end{align}	
\end{theorem}

\begin{theorem}\label{differentiable_case_theorem}{\rm (See, e.g., \cite[Theorem 3.45]{Ruszczynski2006})} Assume that $\bar x$ is a local minimum of~{\normalfont(P)}. Then \eqref{Fermat_Rule} holds and, for every $v\in T_C(\bar x)$ satisfying $\langle \nabla f(\bar x), v\rangle =0$, one has
\begin{align}\label{differentiable_case}
\langle \nabla f(\bar x), w \rangle + \langle \nabla^2 f(\bar x) v,v \rangle \ge 0\ \; \mbox{for all}\ \, w \in T_C^2(\bar x, v).
\end{align}
\end{theorem}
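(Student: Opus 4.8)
The plan is to reduce \eqref{differentiable_case} to a second-order Taylor expansion of $f$ at $\bar x$ along the parabolic arcs that witness membership in the second-order tangent set. Inequality \eqref{Fermat_Rule} is exactly the conclusion of Theorem~\ref{necessary_condition}, so only \eqref{differentiable_case} needs an argument. Fix $v\in T_C(\bar x)$ with $\langle \nabla f(\bar x),v\rangle=0$ and take an arbitrary $w\in T^2_C(\bar x,v)$. By the reformulation of the second-order tangent set recorded just below its definition in Section~2, there are sequences $\tau_k\downarrow 0$ and $w_k\to w$ with $x_k:=\bar x+\frac{\tau_k^2}{2}w_k+\tau_k v\in C$ for every $k$. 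Since $\bar x$ is a local minimum of (P) and $x_k\to\bar x$, we have $f(x_k)\ge f(\bar x)$ for all $k$ large enough.

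Next I would substitute $h_k:=x_k-\bar x=\tau_k v+\frac{\tau_k^2}{2}w_k$ into the Taylor expansion supplied by the $C^2$-smoothness of $f$, namely $f(\bar x+h)=f(\bar x)+\langle\nabla f(\bar x),h\rangle+\frac{1}{2}\langle\nabla^2 f(\bar x)h,h\rangle+o(\|h\|^2)$ as $h\to 0$; in the Banach-space setting this follows from Taylor's theorem with integral remainder together with the continuity of $h\mapsto\nabla^2 f(\bar x+h)$. The linear term reduces to $\frac{\tau_k^2}{2}\langle\nabla f(\bar x),w_k\rangle$ because $\langle\nabla f(\bar x),v\rangle=0$; the quadratic term equals $\frac{\tau_k^2}{2}\langle\nabla^2 f(\bar x)v,v\rangle$ plus terms of orders $\tau_k^3$ and $\tau_k^4$ that are $o(\tau_k^2)$, since $\{w_k\}$ is bounded and $\nabla^2 f(\bar x)$ is a bounded bilinear form; and $\|h_k\|^2=\tau_k^2\|v\|^2+O(\tau_k^3)$, so $o(\|h_k\|^2)=o(\tau_k^2)$. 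Collecting terms,
\[
0\le f(x_k)-f(\bar x)=\frac{\tau_k^2}{2}\big(\langle\nabla f(\bar x),w_k\rangle+\langle\nabla^2 f(\bar x)v,v\rangle\big)+o(\tau_k^2);
\]
dividing by $\tau_k^2/2>0$ and letting $k\to\infty$ (using $w_k\to w$ and the continuity of $\langle\nabla f(\bar x),\cdot\rangle$) gives \eqref{differentiable_case}.

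This is essentially the argument sketched in \cite[p.~144]{Ruszczynski2006}, transcribed to infinite dimensions, and it uses no polyhedrality of $C$ --- that hypothesis enters only in the strengthenings of this theorem obtained later. The only step needing a little care is the remainder bookkeeping: one must confirm that $C^2$-smoothness genuinely yields a Peano-type $o(\|h\|^2)$ remainder in a general Banach space, and that the cross and higher-order pieces $\tau_k^3\langle\nabla^2 f(\bar x)v,w_k\rangle$ and $\frac{\tau_k^4}{4}\langle\nabla^2 f(\bar x)w_k,w_k\rangle$ coming from $\langle\nabla^2 f(\bar x)h_k,h_k\rangle$ are indeed negligible relative to $\tau_k^2$; both are controlled by the boundedness of the convergent sequence $\{w_k\}$ and of the operator $\nabla^2 f(\bar x)$. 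The degenerate case $v=0$ (then $h_k=\frac{\tau_k^2}{2}w_k$ and $\|h_k\|^2=O(\tau_k^4)$) is covered by the same computation.
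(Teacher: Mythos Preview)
Your proposal is correct and is exactly the argument the paper has in mind: the paper does not supply its own proof of this theorem but states that the finite-dimensional proof in \cite[p.~144]{Ruszczynski2006} ``is also valid for the infinite-dimensional setting adopted in the present paper,'' and your write-up is precisely that Taylor-expansion-along-parabolic-arcs argument carried out in a Banach space. Your remainder bookkeeping (in particular the handling of the $o(\|h_k\|^2)$ term and the degenerate case $v=0$) is accurate.
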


Clearly, the simultaneous fulfillment of the inequalities $\langle \nabla f(\bar x), w \rangle\geq 0$ and $\langle \nabla^2 f(\bar x) v,v \rangle \ge 0$ yields the inequality $\langle \nabla f(\bar x), w \rangle + \langle \nabla^2 f(\bar x) v,v \rangle \ge 0$ in~\eqref{differentiable_case}. Hence, it is reasonable to raise the next question.

\medskip
\noindent {\sc Question 1:} \textit{When Theorem~\ref{differentiable_case_theorem} can be stated in the following stronger form:  ``If $\bar x$ is a local minimum of~{\normalfont(P)}, then~\eqref{Fermat_Rule} holds and the conditions
\begin{description}
	\item[{\rm (c1)}] $\langle \nabla f(\bar x), w \rangle \ge 0$ for all $w \in T_C^2(\bar x, v)$, where $v\in T_C(\bar x)$ is such that $\langle \nabla f(\bar x), v\rangle =0$ (i.e., $v$ is a \textit{critical direction}), 
	\item[{\rm (c2)}] $\langle \nabla^2 f(\bar x) v,v \rangle \ge 0$ for all $v\in T_C(\bar x)$ satisfying $\langle \nabla f(\bar x), v\rangle =0$
\end{description}
are fulfilled.''?}

\medskip
If $C$ is a generalized polyhedral convex set, we can answer the above question as follows.
	 
	\begin{theorem}\label{polyhedral_case}
	Let $C$ be a generalized polyhedral convex set in a Banach space~$X$. If $\bar x$ is a local minimum of~{\normalfont(P)}, then~\eqref{Fermat_Rule} holds and the conditions {\rm (c1)} and~{\rm (c2)} are fulfilled. 
	\end{theorem}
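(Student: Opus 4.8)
The plan is to deduce Theorem~\ref{polyhedral_case} from the already-established Theorem~\ref{differentiable_case_theorem} together with Lemma~\ref{Lemma1} and the structural facts about generalized polyhedral convex sets recorded in Proposition~\ref{second-order-tangent-polyhedral_1} and Remark~\ref{remark_1}. The inequality~\eqref{Fermat_Rule} is immediate from Theorem~\ref{necessary_condition}. Condition (c1) is exactly the content of the second assertion of Lemma~\ref{Lemma1}: if $v\in T_C(\bar x)$ is a critical direction, then $\langle\nabla f(\bar x),w\rangle\ge 0$ for all $w\in T^2_C(\bar x,v)$. So the only real work is to prove~(c2).

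To prove~(c2), fix a critical direction $v\in T_C(\bar x)$, i.e. $v\in T_C(\bar x)$ with $\langle\nabla f(\bar x),v\rangle=0$. By Theorem~\ref{differentiable_case_theorem} we have
\begin{equation*}
\langle\nabla f(\bar x),w\rangle+\langle\nabla^2 f(\bar x)v,v\rangle\ge 0\qquad\text{for all }w\in T^2_C(\bar x,v).
\end{equation*}
The idea is to exploit the fact, valid for generalized polyhedral convex sets, that $T^2_C(\bar x,v)$ is itself a generalized polyhedral convex cone (it equals $T_{T_C(\bar x)}(v)$ by Proposition~\ref{second-order-tangent-polyhedral_1}(ii), and by Remark~\ref{remark_1} it contains $T_C(\bar x)$). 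In particular, since $0\in T_C(\bar x)\subset T^2_C(\bar x,v)$ (the zero direction always belongs to the contingent cone of a convex set), we may take $w=0$ in the displayed inequality, which immediately gives $\langle\nabla^2 f(\bar x)v,v\rangle\ge 0$. This is the key step, and it is where generalized polyhedral convexity is essential: in general $0$ need not lie in $T^2_C(\bar x,v)$, so one cannot separate the two terms of~\eqref{differentiable_case}. I would spell out carefully why $0\in T^2_C(\bar x,v)$, invoking Remark~\ref{remark_1} ($T_C(\bar x)\subset T^2_C(\bar x,v)$) and the elementary fact $0\in T_C(\bar x)$ for $C$ convex with $\bar x\in C$.

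Putting the three pieces together finishes the proof: \eqref{Fermat_Rule} from Theorem~\ref{necessary_condition}, (c1) from Lemma~\ref{Lemma1}, and (c2) from the choice $w=0$ in Theorem~\ref{differentiable_case_theorem}. I do not anticipate any serious obstacle; the only point requiring care is to make sure that $0$ is a legitimate second-order tangent direction, since the whole strengthening hinges on it, and to note that both (c1) and (c2) together do indeed recover~\eqref{differentiable_case} (as already remarked before Question~1), so the new statement is genuinely a refinement rather than a reformulation. One could alternatively argue~(c2) directly: with $w=0$ one has $x_k:=\bar x+\tau_k v\in C$ for small $\tau_k>0$ when $v\in T_C(\bar x)$ and $C$ is generalized polyhedral convex (as shown in the proof of Proposition~\ref{second-order-tangent-polyhedral_1}(i)), and a second-order Taylor expansion of $f$ along $x_k$ combined with local optimality and $\langle\nabla f(\bar x),v\rangle=0$ yields $\langle\nabla^2 f(\bar x)v,v\rangle\ge 0$; but routing through Theorem~\ref{differentiable_case_theorem} is shorter.
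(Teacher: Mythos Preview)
Your proof is correct. The treatment of \eqref{Fermat_Rule} and of (c1) is identical to the paper's: both invoke Theorem~\ref{necessary_condition} and Lemma~\ref{Lemma1}, respectively.

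For (c2) the two arguments diverge in presentation. You route through Theorem~\ref{differentiable_case_theorem}: since $C$ is generalized polyhedral convex, Remark~\ref{remark_1} gives $0\in T_C(\bar x)\subset T_C^2(\bar x,v)$, so substituting $w=0$ into~\eqref{differentiable_case} yields $\langle\nabla^2 f(\bar x)v,v\rangle\ge 0$ immediately. The paper instead argues directly: it uses $T_C(\bar x)=\mathrm{cone}\,(C-\bar x)$ (from \cite[Proposition~2.22]{Luan_Yao_Yen}) to write $v=\lambda_0(y-\bar x)$ with $y\in C$, so that $\bar x+\lambda v\in C$ for all small $\lambda>0$, and then performs a second-order Taylor expansion of $f$ along this segment. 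This is precisely the ``alternative'' you sketch at the end of your proposal. Your primary route is shorter and isolates cleanly the single structural fact that drives the strengthening (namely $0\in T_C^2(\bar x,v)$), at the cost of depending on Theorem~\ref{differentiable_case_theorem} as a black box; the paper's route is self-contained and does not need that theorem, but is a few lines longer. Both are valid and essentially equivalent once unwound, since the proof of Theorem~\ref{differentiable_case_theorem} with $w=0$ reduces to exactly the Taylor argument the paper writes out.
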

	\noindent\begin{proof} To obtain (c1), pick an arbitrary vector
	$w \in T_C^2(\bar x, v)$, where $v\in T_C(\bar x)$ and $\langle \nabla f(\bar x), v\rangle =0$. Applying Lemma~\ref{Lemma1}, we have  $\langle \nabla f(\bar x), w \rangle \ge 0$. 
	
	To prove (c2), take any $v\in T_C(\bar x)$ with $ \langle \nabla f(\bar x) , v \rangle =0$. If $v=0$, then the inequality $\langle \nabla^2 f(\bar x) v,v \rangle \ge 0$ is obvious. Now, assume that $v\neq 0$. On one hand, since $C$ is a generalized polyhedral convex set, Proposition~2.22 from~\cite{Luan_Yao_Yen} guarantees that \begin{align*}
		T_C(\bar x)={\rm cone}\, (C-x)=\{ \lambda(x-\bar x) \mid \lambda > 0, \ x\in C \}.
		\end{align*}
		Hence, we have $v=\lambda_0(y-\bar{x})$ for some $y\in C$, $y\not= \bar x$, and $\lambda_0>0$. On the other hand, as $\bar x$ is a local minimum of~{\normalfont(P)}, there exists $\varepsilon >0$ such that $f(\bar x)\le f(x)$ for every $x\in C$ with $||x-\bar x|| \le \varepsilon.$ Put $ \bar \lambda =\min\{\lambda_0, \varepsilon (\lambda_0||y-\bar x|| )^{-1}\}$. Then, $\bar \lambda>0$ and we have $\bar x + \lambda v \in C$ and $||(\bar x +\lambda v)-\bar x|| \le \varepsilon$ for all $\lambda \in (0, \bar \lambda]$.
		Therefore,
		\begin{align*}
		f(\bar x)\le f(\bar x +\lambda v) &=f(\bar x) +\lambda \langle \nabla f(\bar x), v \rangle + \frac{\lambda^2}{2}\langle \nabla^2 f(\bar x)v,v \rangle + o(\lambda^2)\\
		&=f(\bar x) + \frac{\lambda^2}{2}\langle \nabla^2 f(\bar x)v,v \rangle + o(\lambda^2).
		\end{align*}
		It follows that $\frac{\lambda^2}{2}\langle \nabla^2 f(\bar x)v,v \rangle + o(\lambda^2)\geq 0$ for all $\lambda \in (0, \bar \lambda]$. Dividing both sides of the last inequality by $\frac{\lambda^2}{2}$ and taking the limit as $\lambda\to 0^+$, we get $\langle \nabla^2 f(\bar x)v,v \rangle \ge 0,$ as desired. $\hfill\Box$
	\end{proof}

    \begin{remark}\label{remark_2} {\rm In the setting of Theorem~\ref{polyhedral_case}, one has $T_C(\bar x)\subset T^2_C(\ox,v)$ for any  $v\in T_C(\bar x)$. Since the inclusion of sets can be strict (see Remark~\ref{remark_1}), the property~(c1) asserted by Theorem~\ref{polyhedral_case} is more stringent than the first-order necessary condition in~\eqref{Fermat_Rule} which reads as follows: $\langle \nabla f(\ox),u\rangle \ge 0$ for every $u\in T_C(\ox)$.}
	\end{remark}

	As an application of Theorem~\ref{polyhedral_case}, we now specialize it to the case of quadratic programming problems on Banach spaces with generalized polyhedral convex constraint sets. Note that the later problems have been considered, for example, in \cite{Bonnans_Shapiro_2000} and \cite{Yen_Yang_2018}. One calls (P) a \textit{quadratic programming problem} on a generalized polyhedral convex set if $C\subset X$ is a generalized polyhedral convex set and $f(x)=\frac{1}{2}\langle Mx,x\rangle + \langle q, x \rangle+\alpha$, where $M:X\to X^*$ is a bounded linear operator, $q\in X^*$, and $\alpha\in\mathbb R$. It is assumed that $M$ is \textit{symmetric} in the sense that $\langle Mx,y\rangle = \langle My,x\rangle$ for all $x,y\in X$. Since $\nabla f(x)=Mx+q$ and $\nabla^2 f(x) v=Mv$ for all $x,v\in X$, the next statement follows directly from Theorem~\ref{polyhedral_case}.
	
	 \begin{theorem}\label{Theorem_QP} Assume that {\rm (P)} be a quadratic programming problem given by  a generalized polyhedral convex set $C\subset X$ and a linear-quadratic function  $f(x)=\frac{1}{2}\langle Mx,x\rangle + \langle q, x \rangle+\alpha$ with $M$ being symmetric. If $\bar x$ is a local minimum of this problem~{\normalfont(P)}, then the following conditions are satisfied:
	 	\begin{description}
	 			\item[{\rm (c0)}] $\langle M\bar x+q, v   \rangle \ge 0$ for all $v\in T_C(\ox)$;
	 		\item[{\rm (c1')}] $\langle M\bar x+q, w \rangle \ge 0$ for all $w \in T_C^2(\bar x, v)$, where $v\in T_C(\bar x)$ is such that $\langle M\bar x+q, v\rangle =0$, 
	 		\item[{\rm (c2')}] $\langle Mv,v \rangle \ge 0$ for all $v\in T_C(\bar x)$ satisfying $\langle M\bar x+q, v\rangle =0$.
	 	\end{description}
	 	\end{theorem}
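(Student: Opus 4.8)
The plan is to derive Theorem~\ref{Theorem_QP} directly from Theorem~\ref{polyhedral_case}; the only substantive task is to compute the first- and second-order derivatives of the linear-quadratic function $f$ and to record that $f$ is $C^2$-smooth on $X$.

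First I would check smoothness and compute the derivatives. Since $M:X\to X^*$ is a bounded linear operator, the map $x\mapsto\langle Mx,x\rangle$ is a continuous quadratic form whose Fr\'echet derivative at $x$ applied to $h\in X$ is $\langle Mx,h\rangle+\langle Mh,x\rangle$; by the symmetry assumption on $M$ this equals $2\langle Mx,h\rangle$. Combining this with the derivative of the affine part $x\mapsto\langle q,x\rangle+\alpha$ gives $\nabla f(x)=Mx+q$ for every $x\in X$. The gradient mapping $\nabla f:X\to X^*$ is then affine, hence Fr\'echet differentiable with constant derivative $M$, so $\nabla^2 f(x)=M$ and $\nabla^2 f(x)v=Mv$ for all $x,v\in X$; in particular $f$ is $C^2$-smooth (indeed $C^\infty$) on $X$.

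Next, since $C$ is a generalized polyhedral convex set and $\bar x$ is a local minimum of {\rm (P)} with this $f$, I would invoke Theorem~\ref{polyhedral_case}, which gives that~\eqref{Fermat_Rule} holds together with conditions~(c1) and~(c2). Substituting $\nabla f(\bar x)=M\bar x+q$ turns~\eqref{Fermat_Rule} into~(c0) and turns~(c1) into~(c1'); substituting $\nabla^2 f(\bar x)v=Mv$ turns~(c2) into~(c2'). This finishes the proof.

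I do not expect any real obstacle: the argument is routine once the derivatives are in hand. The only point worth a word of care is the use of symmetry of $M$ in the identity $\langle Mx,h\rangle+\langle Mh,x\rangle=2\langle Mx,h\rangle$; without it one would merely get $\nabla f(x)=\frac{1}{2}(M+M^{*})x+q$, where $M^{*}$ denotes the restriction of the adjoint to $X$, and the quadratic form appearing in~(c2') would have to be symmetrized accordingly. If a self-contained argument were preferred instead, one could repeat the proof of Theorem~\ref{polyhedral_case} verbatim with the remainder term $o(\lambda^{2})$ now identically zero, but appealing to Theorem~\ref{polyhedral_case} is the shorter route.
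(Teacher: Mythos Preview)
Your proposal is correct and follows exactly the paper's own approach: the paper simply records that $\nabla f(x)=Mx+q$ and $\nabla^2 f(x)v=Mv$ for all $x,v\in X$ and then states that Theorem~\ref{Theorem_QP} follows directly from Theorem~\ref{polyhedral_case}. Your additional remarks on the role of the symmetry of $M$ are accurate and in the same spirit as the paper's setup.
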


According to the Majthay-Contesse theorem (see~\cite[Theorem~3.4]{Lee_Tam_Yen}), second-order necessary optimality conditions for finite-di\-men\-sio\-nal quadratic programs are also sufficient ones. Thus, it is of interest to know whether a similar assertion remains true for the second-order necessary optimality conditions in Theorem~\ref{Theorem_QP}, or not.

\medskip
\noindent {\sc Question 2:} \textit{Under the assumptions of Theorem~\ref{Theorem_QP}, if $\bar x\in C$ is such that the conditions {\rm (c0)}, {\rm (c1')}, and {\rm (c2')} are fulfilled, then $\bar x$ is a local minimum of~{\normalfont(P)}?}

\medskip
Turning our attention back to Theorem~\ref{polyhedral_case}, observe that if $C$  is not a generalized polyhedral convex set, then the assertions of that theorem may not hold anymore. This means that, in general, the pair of conditions (c1) and (c2) is much stronger than condition \eqref{differentiable_case}. 

\medskip To clarify the above observation, we first consider an example where $C$ is a compact convex set in $\mathbb R^2$, which is given by a simple inequality.

\begin{example}{\rm (See \cite[Example 2, p.~20]{LVD2014}) Consider problem (P) where $X=\mathbb R^2$, $f(x)=-2x_1^2-x_2^2$ for all $x=(x_1,x_2)$, and $$C=\big\{x=(x_1,x_2)\mid g(x)=2x_1^2+3x_2^2-6 \le 0\big\}.$$
 Since $f$ is continuous and $C$ is compact, (P) has a global solution. As $f$ is Fr\'echet differentiable, by a well known necessary optimality condition (see the proof of Theorem~5.1 in \cite{Mordukhovich_2006b}) which is a dual form of the condition recalled in Theorem~\ref{necessary_condition}, if $\bar x=(\bar x_1, \bar x_2)$ is a solution of (P) then 
 \begin{align}\label{Fermat_Ex}
 0\in \nabla f(\bar x)+\widehat{N}_C(\bar x).
 \end{align}
On one hand, $\nabla f(\bar x)=(-4\bar x_1, -2\bar x_2)^T$. On the other hand, as $C$ is a convex set, $\widehat{N}_C(\bar x)$ coincides with the normal cone to $C$ at $\bar x$ in the sense of convex analysis. Hence, by~\cite[p.~206]{IoffeTihomirov} we have 
$\widehat{N}_C(\bar x)=\{\lambda \nabla g(\bar x)=\lambda(4 \bar x_1, 6\bar x_2)^T\mid \lambda \ge 0\}$ whenever $\bar x\in \partial C$. Therefore, if $\bar x\in \partial C$, then \eqref{Fermat_Ex} is equivalent to the existence of $\lambda \ge 0$ satisfying
$$\begin{cases}
-4 \bar x_1 + 4 \lambda \bar x_1 =0\\
	-2 \bar x_2 + 6\lambda \bar x_2 =0.
\end{cases}
$$
From this condition, we get four critical points $\bar x ^1=(\sqrt{3}, 0)^T$, $\bar x ^2=(-\sqrt{3}, 0)^T$, $\bar x ^3=(0, -\sqrt{2})^T$, $\bar x ^4=(0, \sqrt{2})^T$. If $\bar x\in {\rm int}C$, then~\eqref{Fermat_Ex} is equivalent to the condition $\nabla f(\bar x)=0$, which gives the fifth critical point $\bar x ^5=(0, 0)^T$. Comparing the values of $f$ at these five points, we conclude that $\bar x ^1=(\sqrt{3}, 0)^T$ and $\bar x ^2=(-\sqrt{3}, 0)^T$ are the global minima of (P). Obviously, there exists $x^0\in\mathbb R^2$ such that $\langle \nabla g(\bar x^1), x^0\rangle< 0.$ This means that the regularity condition in \cite[Lemma 3.16]{Ruszczynski2006} is satisfied. So, according to \cite[formula~(3.29), p.~115]{Ruszczynski2006}, one has
\begin{align*}
T_C(\bar x^1)&=\{v \in \mathbb{R}^2 \mid \langle\nabla g(\bar x^1), v \rangle \le 0 \}\\
&=\{v=(v_1, v_2)\in \mathbb{R}^2 \mid v_1 \le 0,\ v_2 \in \mathbb{R} \}.
\end{align*}
Since $\nabla f(\bar x^1)=\left( -4 \sqrt{3},0 \right)^T$, fixing any $v=(0,v_2)^T \in T_C(\bar x^1)$, we have $\langle \nabla f(\bar x^1), v \rangle =0$. Moreover, by~\cite[Lemma 3.44]{Ruszczynski2006}, 
\begin{align*}
T^2_C(\bar x^1, v)&=\{w=(w_1,w_2) \in \mathbb{R}^2 \mid \langle \nabla g (\bar x^1), w \rangle \le - \langle \nabla^2 g(\bar x^1) v,v \rangle \}\\
&=\Big\{w=(w_1,w_2) \in \mathbb{R}^2 \mid w_1 \le \dfrac{-6 v_2^2}{4 \sqrt{3}}\Big\}.
\end{align*}
It follows that $\langle \nabla f(\bar x^1), w \rangle=-4 \sqrt{3}w_1\geq 0$ for every $w \in T_C^2(\bar x, v)$. Hence, condition (c1) in Theorem \ref{polyhedral_case} is satisfied. Since $\langle \nabla^2 f(\bar x^1) v, v \rangle =-2v_2^2$, the requirement $\langle \nabla^2 f(\bar x) v,v \rangle \ge 0$ in condition (c2) is violated if $v_2\neq 0$. Thus, the pair of conditions (c1) and (c2) does not hold, while condition~\eqref{differentiable_case} is fulfilled.}
\end{example}

Next, let us consider an example where $C$ is a nonconvex compact set given by an equality.

\begin{example}(See \cite[Example 1, p.~29]{LVD2014}) \rm 
Consider problem {\normalfont(P)} and suppose that $f(x)=-x_1^2-x_2^2$ for $x=(x_1,x_2)\in\mathbb R^2$, $$C=\big\{x=(x_1,x_2)\in\mathbb R^2 \mid h(x)=x_1^2+2x_2^2-1 = 0\big\}.$$
As it has been shown in~\cite[p.~29]{LVD2014}, $\bar x ^1=(1, 0)^T$ and $\bar x ^2=(-1, 0)^T$ are the global solutions of this problem. According to \cite[Formula~(3.29), p.~115]{Ruszczynski2006},
\begin{align*}
T_C(\bar x^2)=\{v=(v_1, v_2)\in\mathbb{R}^2 \mid v_1 = 0\}.
\end{align*}
Fixing any $v=(0, v_2)^T\in T_C(\bar x^2)$, we have $\langle\nabla f(\bar x^2),v\rangle=0$. By~\cite[Lemma~3.44]{Ruszczynski2006},
\begin{align*}
T^2_C(\bar x^2, v)&=\{w=(w_1,w_2) \in \mathbb{R}^2 \mid \langle \nabla h (\bar x^2), w \rangle = - \langle  \nabla^2 h(\bar x^2) v,v \rangle \}\\
&=\{w=(w_1,w_2) \in \mathbb{R}^2 \mid w_1=2v_2^2 \}.
\end{align*}
Since $\langle \nabla f(\bar x^2), w \rangle=2w_1=4v_2^2 \ge 0$ for all $w \in T_C^2(\bar x, v)$, condition~(c1) in Theorem~\ref{polyhedral_case} is satisfied. Meanwhile, since $\langle \nabla^2 f(\bar x^2) v ,v\rangle =-2v_2^2 \le 0$, the inequality $\langle \nabla^2 f(\bar x) v,v \rangle \ge 0$ in condition (c2) is violated if $v_2\neq 0$. Thus, the conditions (c1) and (c2) do not hold simultaneously, while condition~\eqref{differentiable_case} is fulfilled.
\end{example}

\section{Problems in a new setting}
\markboth{\centerline{\it Optimality conditions}}{\centerline{\it D.T.V.~An
		and N.D.~Yen}} \setcounter{equation}{0}
	
The following second-order necessary optimality condition for (P) is one of the main results of this paper. It is based on the Fr\'echet second-order subdifferential of $f$ and the second-order tangent set to $C$, which is assumed to be a convex set of a special type. Unlike the situation in Theorem~\ref{polyhedral_case} where $f$ was assumed to be a $C^2$-smooth function, in the next theorem and throughout this section we just assume that $f$ is a $C^1$-smooth function.

\begin{theorem}{\rm (Second-order necessary optimality condition)}\label{second_order_condition}
	Assume that $\ox$ is a locally optimal solution of {\normalfont(P)}, where $C$ is a generalized polyhedral convex set. Suppose that there exists a constant $\ell>0$ such that
\begin{align}\label{calmness_of_f}
||\nabla f(x) -\nabla f(\bar x)||\le \ell||x - \bar x||
\end{align}
for every $x$ in some neighborhood of $\bar x$. Consider the restricted second-order subdifferential $\widehat{\partial}^2 f(\ox): X \rightrightarrows X^*$, where $X$ is canonically embedded in $X^{**}$. Then, \eqref{Fermat_Rule} is valid and, for each $v\in T_C(\ox)$ such that $-v\in T_C(\ox)$ and $\langle\nabla f(\ox) , v \rangle =0$, one has
\begin{align}\label{second_order_formula_1n}
\langle\nabla f(\ox), w \rangle\ge 0
\end{align}
and 
\begin{align}\label{second_order_formula_1na}
\langle z,v \rangle \ge 0
 \end{align} 
for any $w\in T_C^2(\ox,v)$ and $z\in \widehat{\partial}^2 f(\ox)(v).$
\end{theorem}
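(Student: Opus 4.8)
The plan is to prove the three conclusions one at a time. Inequality \eqref{Fermat_Rule} is exactly the first-order necessary condition of Theorem~\ref{necessary_condition}, and \eqref{second_order_formula_1n}, i.e. $\langle\nabla f(\bar x),w\rangle\ge 0$ on $T_C^2(\bar x,v)$, is the second assertion of Lemma~\ref{Lemma1} applied to the critical direction $v$ (that lemma uses only $\langle\nabla f(\bar x),v\rangle=0$, so $C^1$-smoothness suffices and the assumption $-v\in T_C(\bar x)$ plays no role there). Hence the whole proof is devoted to \eqref{second_order_formula_1na}: $\langle z,v\rangle\ge 0$ for every $z\in\widehat{\partial}^2 f(\bar x)(v)$. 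The case $v=0$ being trivial, assume $v\ne 0$.

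First I would make the membership $z\in\widehat{\partial}^2 f(\bar x)(v)$ explicit. Since $f$ is $C^1$-smooth, $\widehat\partial f$ coincides with the single-valued continuous map $\nabla f\colon X\to X^*$ on a neighborhood of $\bar x$; unravelling the definitions of the Fr\'echet second-order subdifferential and of the Fr\'echet coderivative, $z\in\widehat{\partial}^2 f(\bar x)(v)$ amounts to $(z,-v)\in\widehat N_{\gph\nabla f}(\bar x,\nabla f(\bar x))$, with $v$ read as an element of $X^{**}$ through the canonical embedding. Writing this out along the graph $\{(x,\nabla f(x))\}$, it says: for every $\varepsilon>0$ there is $\delta>0$ such that
\begin{equation}\label{plan_fn}
\langle z,x-\bar x\rangle-\langle\nabla f(x)-\nabla f(\bar x),v\rangle\le\varepsilon\big(\|x-\bar x\|+\|\nabla f(x)-\nabla f(\bar x)\|\big)\quad\text{whenever }\|x-\bar x\|<\delta.
\end{equation}
The calmness hypothesis \eqref{calmness_of_f} is used here only to replace the right-hand side of \eqref{plan_fn} by $\varepsilon(1+\ell)\|x-\bar x\|$ for $x$ close to $\bar x$, which is what prevents the denominator from interfering once we divide by a small step length.

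Next I would generate feasible test points along a line. Because $v$ and $-v$ both lie in $T_C(\bar x)$ and $C$ is generalized polyhedral convex, writing $C$ in the form \eqref{generalized_polyhedra_convex_1} and invoking Proposition~\ref{second-order-tangent-polyhedral_1}(i) gives $Av=0$ and $\langle x_i^*,v\rangle=0$ for all $i\in I(\bar x)$; since the strict inequalities $\langle x_i^*,\bar x\rangle<\alpha_i$ for $i\notin I(\bar x)$ survive small perturbations, $\bar x+tv\in C$ for all $t$ with $|t|$ sufficiently small. By local optimality of $\bar x$, the $C^1$ function $\phi(t):=f(\bar x+tv)$, which satisfies $\phi'(t)=\langle\nabla f(\bar x+tv),v\rangle$ and $\phi'(0)=\langle\nabla f(\bar x),v\rangle=0$, has a local minimum at $t=0$; a mean value argument on the intervals $[-s,0]$ as $s\downarrow 0$ then yields a sequence $t_k\downarrow 0$ with $\phi'(-t_k)\le 0$. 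Now fix $\varepsilon>0$ and substitute $x=\bar x-t_kv$ into \eqref{plan_fn} for $k$ large enough that $\|t_kv\|$ is below both $\delta$ and the calmness radius; after dividing by $t_k>0$, using $\langle\nabla f(\bar x-t_kv)-\nabla f(\bar x),v\rangle=\phi'(-t_k)\le 0$ together with the calmness bound, one obtains $\langle z,v\rangle\ge-\varepsilon(1+\ell)\|v\|$, and letting $\varepsilon\downarrow 0$ gives \eqref{second_order_formula_1na}.

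The step I expect to be the crux is the choice of \emph{backward} test points $\bar x-t_kv$ rather than the forward ones: substituting $x=\bar x+t_kv$ only produces an upper estimate for $\langle z,v\rangle$, whereas the backward substitution, fed the sign information $\phi'(-t_k)\le 0$ coming from minimality of $\phi$ just to the left of $0$, delivers the required lower bound. This is precisely where the hypothesis $-v\in T_C(\bar x)$ is indispensable (it is what keeps $\bar x-t_kv$ feasible), and I expect one of the examples announced in the paper to show that it cannot be dropped. Everything else reduces to routine manipulation of the Fr\'echet normal cone and the one-variable chain rule.
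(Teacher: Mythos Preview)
Your proposal is correct and follows essentially the same route as the paper: both derive \eqref{Fermat_Rule} and \eqref{second_order_formula_1n} from Theorem~\ref{necessary_condition} and Lemma~\ref{Lemma1}, then obtain \eqref{second_order_formula_1na} by unpacking $(z,-v)\in\widehat N_{\gph\nabla f}(\bar x,\nabla f(\bar x))$, using the generalized polyhedrality together with $-v\in T_C(\bar x)$ to secure feasibility of backward points $\bar x-t_kv$, applying the mean value theorem to get $\langle\nabla f(\bar x-t_kv),v\rangle\le 0$, and closing with the calmness bound~\eqref{calmness_of_f}. The only cosmetic differences are that the paper argues by contradiction via a $\limsup$ quotient whereas you give a direct $\varepsilon$--$\delta$ estimate, and the paper works in $X$ while you pass to the scalar function $\phi(t)=f(\bar x+tv)$; neither changes the substance.
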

\noindent\proof\  Let $\ox$ be such a locally optimal solution of {\normalfont(P)} that 
\eqref{calmness_of_f} is valid for all $x$ in a neighborhood $U$ of $\bar x$, where  $\ell$ is a positive constant.
Let $v\in T_C(\ox)$ be such that  $-v\in T_C(\ox)$ and $\langle\nabla f(\ox) , v \rangle =0$. Suppose that $w \in T_C^2(\bar x, v) $ and $z\in \widehat{\partial}^2 f(\ox)(v)$ are given arbitrarily. Since $C$ is a generalized polyhedral convex set, by Lemma~\ref{Lemma1} we have~\eqref{second_order_formula_1n}. It remains to prove~\eqref{second_order_formula_1na}. To obtain a contraction, suppose that 
	\begin{align}
	\label{assumption_contrary}
	\langle z,v \rangle < 0.
	\end{align}
By the definition of Fr\'echet second-order subdifferential, from $z\in \widehat{\partial}^2 f(\ox)(v)$ we get $ z \in \widehat{D}^{*} \nabla f(\cdot)(\bar{x})(v)$ or, equivalently, $ (z,-v) \in \widehat{N}_{\textrm{gph} \nabla f(\cdot)}((\bar{x},\nabla f(\ox))).$ So, one has
	\begin{align}\label{formula_1a}
	\limsup\limits_{x\rightarrow \bar x}\frac{\langle
		(z,-v),\big(x,\nabla f(x)\big)-(\bar x,\nabla f(\ox))\rangle}{\|x-\bar x\|+\|\nabla f(x)-\nabla f(\ox)\|} \leq 0.\end{align} Recall that every vector $u\in X$ can be regarded as an element of $X^{**}$ by setting $\langle u,x^*\rangle =\langle x^*,u\rangle$ for all $x^*\in X^*$. Hence $\langle u,\nabla f(x)\rangle=\langle \nabla f(x),u\rangle$ for all $u, x\in X$.   
	Since $\langle \nabla f(\ox), v \rangle =0$, from \eqref{formula_1a} we obtain
	\begin{align}\label{formula2}
	\limsup\limits_{x\rightarrow \bar x}\frac{\langle z, x-\bar{x} \rangle-\langle \nabla f(x),v \rangle}{\|x-\bar x\|+\|\nabla f(x)-\nabla f(\ox)\|} \leq 0.
	\end{align}
	Moreover, as $C$ is a generalized polyhedral convex set, there exists $\bar k\in\mathbb N$ such that $x^k:=\bar x-\frac{1}{k}v$ belongs to $C$ for all $k\geq \bar k$.
		
	{Since $\bar{x} $ is a local solution of~{\normalfont (P) and $\displaystyle\lim_{k\to\infty}x^k=\bar x$}, there is no loss of generality in assuming that \begin{align}\label{ineq_k}f(x^k)\geq f(\ox),\ \, \forall k\geq \bar k.\end{align} For each $k\geq \bar k$, by the classical mean value theorem one can find a vector $$\xi^k\in (\bar x,x^k):=\{(1-\tau)\bar
		x+\tau x^k\ |\ \tau\in (0,1)\}$$ such that
		$f(x^k)-f(\ox) = \langle \nabla f(\xi^{k}), x^k -\ox \rangle.$ Since $x^k=\bar x-\frac{1}{k}v$, combining this  with~\eqref{ineq_k} yields $-\frac{1}{k}\langle \nabla f(\xi_{k}), v \rangle\geq 0.$  It follows that
		\begin{align}
	\label{formula3}
\langle \nabla f(\xi_{k}), v \rangle \le 0\quad (\forall k\geq\bar k).
	\end{align}
	From \eqref{formula2} we can deduce that \begin{align*}
	\limsup\limits_{{k} \rightarrow \infty}\frac{\langle z, \xi_{k}-\bar{x} \rangle-\langle \nabla f(\xi_{k}),v \rangle}{\|\xi_{k}-\bar x\|+\|\nabla f(\xi_{k})-\nabla f(\ox)\|} \leq 0.
	\end{align*} Noting that $\xi_{k}=\ox -t_kv$ for some $t_k \in \left(0, \frac{1}{k}\right)$, from this one gets 
	\begin{align}\label{formula4}
	\limsup\limits_{{k} \rightarrow \infty}\Delta_k \leq 0,
\end{align} where  $$\Delta_k:= \frac{-t_k\langle z, v \rangle-\langle \nabla f(\xi_{k}),v \rangle}{\|-t_k v||+\|\nabla f(\xi_{k})-\nabla f(\ox)\|}.$$ Clearly,
\begin{align*}
\Delta_k= \frac{-\langle z, v \rangle-t_k^{-1}\langle \nabla f(\xi_{k}),v \rangle}{\|v||+t_k^{-1}\|\nabla f(\xi_{k})-\nabla f(\ox)\|}.\end{align*} Hence, by~\eqref{formula3} one has
$$\Delta_k \ge \frac{-\langle z, v \rangle}{\|v||+t_k^{-1}\|\nabla f(\xi_{k})-\nabla f(\ox)\|}.$$ On one hand, using~\eqref{calmness_of_f} we obtain
	\begin{align*}
	|| \nabla f(\xi_{k}) -\nabla f(\ox)||\le \ell || \xi_{k} - \ox ||=\ell t_k|| v||,
	\end{align*} provided that $k$ is large enough. On the other hand, by virtue of~\eqref{assumption_contrary} we have $-\langle z, v \rangle>0$. Consequently, for large enough indexes $k$, it holds that
	$$ \Delta_k \ge \frac{-\langle z, v \rangle }{(1+\ell) \|v\|}.$$ 
	So, we get $ \limsup\limits_{{k} \rightarrow \infty} \Delta_k > 0$, which contradicts~\eqref{formula4}. 
	
	The proof is complete.
	$\hfill\Box$

\begin{remark}\label{remark_3}{\rm To compare Theorem~\ref{second_order_condition} with Theorem~\ref{polyhedral_case}, assume for a while that $f$ is $C^{2}$-smooth. Let $\ox$ be a locally optimal solution of {\normalfont(P)}, where $C$ is a generalized polyhedral convex set. Then, applying the mean-value theorem for vector-valued functions (see~\cite[p.~27]{IoffeTihomirov}) to the gradient mapping $\nabla f(\cdot):X\to X^*$, one can show that there exists a constant $\ell>0$ such that~\eqref{calmness_of_f} holds for every $x$ in some neighborhood of $\bar x$. Since $\widehat{\partial}^2 f(\bar x)(u)=\{\nabla^2 f(\ox)^* u\}$ for every $u$ in the space $X$, which is canonically embedded in $X^{**}$, inequality~\eqref{second_order_formula_1na} means that $\langle \nabla^2 f(\ox)^* v,v \rangle \ge 0$. Hence, $\langle  v,\nabla^2 f(\ox)v \rangle \ge 0$. By the definition of the canonical embedding of $X$ in $X^{**}$, the latter means that $\langle  \nabla^2 f(\ox)v,v \rangle \ge 0$. Therefore, the assertions of Theorem~\ref{second_order_condition} coincide with those of Theorem~\ref{polyhedral_case}, provided that the critical direction $v$ satisfies the condition $-v\in T_C(\ox)$. Thus, in comparison with Theorem~\ref{polyhedral_case}, although Theorem~\ref{second_order_condition} helps us to treat optimization problems with objective functions from a larger class, it does not provide a complete extension for the former theorem.}
\end{remark}

When $C=X$, (P) becomes the unconstrained optimization problem
\begin{equation*}
\min \{f(x) \mid x \in X\} \tag{\rm P1}
\end{equation*}
with $ f: X\rightarrow \mathbb{R} $ being a $ C^{1}$-smooth function. From Theorem~\ref {second_order_condition} one can easily derive the following second-order optimality condition for (P1), which is due to Chieu \textit{et al.}~\cite{ChieuLeeYen2017}.

\begin{theorem}{\rm (See \cite[Theorem 3.3]{ChieuLeeYen2017})}
	\label{CLY2013} Suppose that $\ox$ is a local solution of {\normalfont(P1)} and there exists $\ell >0$ such that $||\nabla f(x)-\nabla f(\ox)|| \le \ell ||x-\ox||$
	for every $x$ in some neighborhood of $\ox$. Then $\nabla f(\ox)=0$ and the second-order subdifferential $\widehat{\partial}^2 f(\ox): X \rightrightarrows X^*$, where $X$ is canonically embedded in $X^{**}$, is positive semi-definite, i.e., $\langle z, u \rangle \ge 0$ for any $u\in X$ and $z\in \widehat{\partial}^2 f(\ox) (u).$
\end{theorem}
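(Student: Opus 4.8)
The plan is to derive this statement as a direct corollary of Theorem~\ref{second_order_condition} by specializing to the constraint set $C=X$. First I would observe that $C=X$ is trivially a generalized polyhedral convex set, namely of the form~\eqref{generalized_polyhedra_convex} with $L=X$ and an empty list of inequality constraints (so $p=0$); thus all hypotheses on the constraint set in Theorem~\ref{second_order_condition} are automatically fulfilled. The calmness assumption on $\nabla f$ near $\ox$ is carried over verbatim. Next I would compute the relevant tangent objects: since $C=X$ one has $T_C(\ox)=X$, so in particular every $v\in X$ satisfies $v\in T_C(\ox)$ \emph{and} $-v\in T_C(\ox)$, and also $T^2_C(\ox,v)=X$ for every $v$.

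Then I would invoke the first-order conclusion~\eqref{Fermat_Rule} of Theorem~\ref{second_order_condition}: $\langle\nabla f(\ox),v\rangle\ge 0$ for all $v\in T_C(\ox)=X$. Applying this with both $v$ and $-v$ forces $\langle\nabla f(\ox),v\rangle=0$ for every $v\in X$, hence $\nabla f(\ox)=0$. Consequently, \emph{every} direction $v\in X$ is a critical direction in the sense that $\langle\nabla f(\ox),v\rangle=0$, and moreover $-v\in T_C(\ox)$ holds for all such $v$, so the second-order part of Theorem~\ref{second_order_condition} applies to every $v\in X$ with no restriction.

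Now fix an arbitrary $u\in X$ and an arbitrary $z\in\widehat{\partial}^2 f(\ox)(u)$. By the above, $u\in T_C(\ox)$, $-u\in T_C(\ox)$, and $\langle\nabla f(\ox),u\rangle=0$, so Theorem~\ref{second_order_condition} yields inequality~\eqref{second_order_formula_1na}, that is, $\langle z,u\rangle\ge 0$. (The companion inequality~\eqref{second_order_formula_1n}, $\langle\nabla f(\ox),w\rangle\ge 0$ for $w\in T^2_C(\ox,u)=X$, is vacuous here since $\nabla f(\ox)=0$.) Since $u\in X$ and $z\in\widehat{\partial}^2 f(\ox)(u)$ were arbitrary, this is exactly the asserted positive semi-definiteness of $\widehat{\partial}^2 f(\ox)$, which completes the proof. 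I do not expect any genuine obstacle in this argument: the only point requiring a moment's care is the bookkeeping that $C=X$ legitimately fits the generalized polyhedral convex framework (the degenerate case $p=0$, $L=X$) so that Theorem~\ref{second_order_condition} is indeed applicable.
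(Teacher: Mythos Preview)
Your proposal is correct and matches the paper's own approach: the paper explicitly states that Theorem~\ref{CLY2013} is obtained from Theorem~\ref{second_order_condition} by taking $C=X$, and your argument spells out exactly this specialization.
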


 Dai~\cite[Chapter~3]{LVD2014} has extended the finite-dimensional version of Theorem~\ref{CLY2013} to case of constrained $C^1$-smooth optimization problems of the form 
\begin{equation*}
\min \{f(x) \mid h(x)=0\}\tag{\rm P2}
\end{equation*}
with $h(x)=Ax+b$, where $A\in \mathbb{R}^{p\times n}$ is a given matrix and $b\in \mathbb{R}^p$ is a given vector. In this case, one has $C=\{x\in\mathbb R^n\mid Ax+b=0\}$. Thus, $C$ is a special polyhedral convex set in $\mathbb R^n$. The Lagrange function associated with {\rm (P2)} is defined by setting $ L(x, \mu)=f(x)+ \langle \mu, h(x)\rangle$ for $(x,\mu)\in \mathbb R^n\times\mathbb R^p$.

\medskip
\begin{theorem}{\rm (See \cite[Theorem 3.3]{LVD2014})}
	\label{LVD2014}
	Suppose that $\ox$ is a local solution of {\normalfont(P2)} and $\bar \mu\in\mathbb R^p$ is a Lagrange multiplier corresponding to $\ox$, that is, \begin{equation}\label{1st_order_Langrange}\nabla_x L (\ox, \bar\mu)=\nabla f(\bar x)+A^T\bar\mu=0.\end{equation} Suppose that, in addition, there exists a constant $\ell>0$ and a neighborhood $U$ of $\ox$ such that $||\nabla f(x)-\nabla f(\ox)|| \le \ell ||x-\ox||$
	for all $x\in U$. Then, for any $v\in \mathbb{R}^n$ with $Av=0$, one has $\langle z, v \rangle \ge 0$ for any $z\in \widehat{\partial}^2 L(\cdot,\bar\mu)(\ox)(v)$.
\end{theorem}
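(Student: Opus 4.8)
The plan is to obtain Theorem~\ref{LVD2014} as a direct consequence of Theorem~\ref{second_order_condition}, after recasting the data of (P2) in the framework of that theorem. First I would record the geometry of the feasible set. Here $C=\{x\in\mathbb R^n\mid Ax+b=0\}$ is an affine subspace, hence a generalized polyhedral convex set (with no inequality constraints in the representation~\eqref{generalized_polyhedra_convex_1}); by Proposition~\ref{second-order-tangent-polyhedral_1}(i) we get $T_C(\bar x)=\ker A=\{v\in\mathbb R^n\mid Av=0\}$. Since $\ker A$ is a linear subspace, $v\in T_C(\bar x)$ automatically gives $-v\in T_C(\bar x)$. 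Moreover, the Lagrange condition~\eqref{1st_order_Langrange} reads $\nabla f(\bar x)=-A^T\bar\mu$, so for every $v\in\ker A$ one has $\langle\nabla f(\bar x),v\rangle=-\langle\bar\mu,Av\rangle=0$; thus every such $v$ is a critical direction meeting exactly the restrictions imposed on $v$ in Theorem~\ref{second_order_condition}. Finally, the hypothesis $\|\nabla f(x)-\nabla f(\bar x)\|\le\ell\|x-\bar x\|$ on $U$ is precisely condition~\eqref{calmness_of_f}.

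Next I would pass from the Lagrangian to $f$ at the level of Fr\'echet second-order subdifferentials. Since $L(x,\bar\mu)=f(x)+\langle\bar\mu,Ax+b\rangle$ differs from $f$ by an affine function, its partial gradient is $\nabla_x L(x,\bar\mu)=\nabla f(x)+A^T\bar\mu$, whence $\gph\nabla_x L(\cdot,\bar\mu)=\gph\nabla f+\{(0,A^T\bar\mu)\}$ is merely a translate of $\gph\nabla f$. Fr\'echet normal cones are invariant under translations, so $\widehat N_{\gph\nabla_x L(\cdot,\bar\mu)}((\bar x,0))=\widehat N_{\gph\nabla f}((\bar x,\nabla f(\bar x)))$; combining this with $\widehat\partial L(\cdot,\bar\mu)(\bar x)=\{0\}$ (by~\eqref{1st_order_Langrange}) and $\widehat\partial f(\bar x)=\{\nabla f(\bar x)\}$ yields $\widehat\partial^2 L(\cdot,\bar\mu)(\bar x)(v)=\widehat\partial^2 f(\bar x)(v)$ for every $v\in\mathbb R^n$. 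The same translation remark shows that $\nabla_x L(\cdot,\bar\mu)$ inherits the calmness of $\nabla f$ at $\bar x$.

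With these identifications in hand, I would invoke Theorem~\ref{second_order_condition} for the present $f$ and $C$: for each $v\in T_C(\bar x)$ with $-v\in T_C(\bar x)$ and $\langle\nabla f(\bar x),v\rangle=0$, inequality~\eqref{second_order_formula_1na} gives $\langle z,v\rangle\ge 0$ for every $z\in\widehat\partial^2 f(\bar x)(v)$. By the first paragraph, every $v\in\mathbb R^n$ with $Av=0$ is such a direction, and by the second paragraph $\widehat\partial^2 f(\bar x)(v)=\widehat\partial^2 L(\cdot,\bar\mu)(\bar x)(v)$, so the asserted inequality follows. I expect the only point requiring a little care to be the translation argument identifying the two second-order subdifferentials, i.e.\ a clean statement that adding an affine term to a $C^1$-smooth function leaves its Fr\'echet second-order subdifferential unchanged; everything else is a routine verification that the hypotheses of Theorem~\ref{second_order_condition} are fulfilled. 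Alternatively, one could reprove the result from scratch by repeating the contradiction argument in the proof of Theorem~\ref{second_order_condition}, using the feasible sequence $x^k=\bar x-\tfrac1k v\in C$ and the mean value theorem applied to $\nabla_x L(\cdot,\bar\mu)$, but the reduction above is shorter.
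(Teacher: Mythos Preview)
Your proposal is correct and follows essentially the same route as the paper: identify $C=\{x\mid Ax+b=0\}$ as generalized polyhedral with $T_C(\bar x)=\ker A$ (so $\pm v\in T_C(\bar x)$ and $\langle\nabla f(\bar x),v\rangle=0$ for $Av=0$), observe that $\nabla_x L(\cdot,\bar\mu)=\nabla f(\cdot)+A^T\bar\mu$ yields $\widehat\partial^2 L(\cdot,\bar\mu)(\bar x)=\widehat\partial^2 f(\bar x)$, and then invoke Theorem~\ref{second_order_condition}. The only cosmetic difference is that you justify the equality of second-order subdifferentials via the translation-invariance of Fr\'echet normal cones, whereas the paper simply asserts it from $\nabla_x L(x,\bar\mu)=\nabla f(x)+A^T\bar\mu$; both are the same observation.
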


Theorem~\ref{second_order_condition} is a generalization of Theorem~\ref{LVD2014}. Indeed, the existence of~$\bar \mu\in\mathbb R^p$ satisfying~\eqref{1st_order_Langrange} follows from the necessary condition in~\eqref{Fermat_Rule} and Farkas' Lemma (see, e.g., \cite[p.~200]{Rockafellar_1970}). On one hand, since $\nabla_x L(x, \mu)=\nabla f(x)+ A^T\mu$ for every $(x,\mu)\in \mathbb R^n\times\mathbb R^p$, one has $\widehat{\partial}^2 L(\cdot,\bar\mu)(\ox)(\cdot)=\widehat{\partial}^2 f(\bar x)(\cdot)$. Hence, the inclusion $z\in \widehat{\partial}^2 L(\cdot,\bar\mu)(\ox)(v)$ is equivalent to saying that $z\in \widehat{\partial}^2 f(\bar x)(v)$. On the other hand, as $T_C(\bar x)=\{u\in\mathbb R^n\mid Au=0\}$, the condition $Av=0$ implies that $v\in T_C(\ox)$ and $-v\in T_C(\ox)$. Moreover, from~\eqref{Fermat_Rule} one deduces that $\langle\nabla f(\ox) , v \rangle =0$. Therefore, its follows from~\eqref{second_order_formula_1na} that $\langle z, v \rangle \ge 0$ for any $z\in \widehat{\partial}^2 L(\cdot,\bar\mu)(\ox)(v)$.

\medskip
Theorem~\ref{second_order_condition} asserts that inequality~\eqref{second_order_formula_1na} holds 
for any $z\in \widehat{\partial}^2 f(\ox)(v)$ if the critical direction $v$ satisfies the \textit{additional condition} $-v\in T_C(\ox)$.  The following example will show that \textit{the last condition is essential} for the validity of the assertion. 
	
\begin{example} Let $n=1$, $C=\mathbb R_+$, $g(x)=-x$ for $x\leq 0$ and  $g(x)=x^2$ for $x\geq 0$. Define $f(x)=\displaystyle\int_0^x g(t)dt$ for all $x\in\mathbb R$, where the integration is Riemannian. Since $g(\cdot)$ is continuous on $\mathbb{R}$, {$f$} is a $C^1$-smooth function and $\nabla f(x)=g(x)$ for $x\in\mathbb R$. Note that  $f(x)=-\frac{1}{2}x^2$ for $x\leq 0$, $f(x)=\frac{1}{3}x^3$ for $x\geq 0$. Consider the point $\bar x:=0$, which is the unique global solution of {\normalfont(P)}. Clearly, $f$ satisfies condition~\eqref{calmness_of_f} for every $x\in (-1,1)$ with $\ell=1$. On one hand, by Proposition~\ref{second-order-tangent-polyhedral_1} we have $T_C(\bar x)= \mathbb{R_+}$ and \begin{align*}
		T^2_C(\bar x, v)=T_{T_C(\bar x)}(v)=\begin{cases}
		\mathbb{R} & \mbox{if} \  v >0,\\
		\mathbb{R_+} & \mbox{if} \ v=0.
		\end{cases}
		\end{align*}
		On the other hand, using the definition of the second-order subdifferential, we have	
		\begin{align*}\begin{array}{rcl}
		z\in\widehat{\partial}^{2} f(\bar{x})(v)   &\Leftrightarrow & z \in \widehat{D}^*\nabla f(\cdot)(\bar{x})(v)  \\
		&\ \Leftrightarrow\ &  (z,-v)  \in \widehat{N}_{\textrm{gph} \nabla f(\cdot)} ((\bar{x}, \nabla f (\bar x))) \\
		&\Leftrightarrow & \limsup\limits_{x\to\ \bar{x}} \dfrac{\langle(z, -v), (x,\nabla f(x))-(\bar{x},\nabla f (\bar x))\rangle}{|x-\bar{x}|+|\nabla f(x)-\nabla f(\bar{x})|} \le 0.
		\end{array}
		\end{align*}
		Since $\bar x=0$ and $\nabla f(\bar x)=0$, the last inequality is equivalent to
		\begin{align}\label{2.1c}
		\limsup\limits_{x\to 0}  \dfrac{z x-v\nabla f(x)}{|x|+|\nabla f(x)|}  \leq0.
		\end{align} 
		From~\eqref{2.1c} one has
		\begin{align*}
		0 \ge	\limsup\limits_{x\to 0^+}  \dfrac{z x-vx^2}{x+x^2} = \limsup\limits_{x\to 0^+}  \dfrac{z -vx}{1+x}=z
		\end{align*}
		and
		\begin{align*}
		0 \ge	\limsup\limits_{x\to 0^-}  \dfrac{z x+vx}{-2x} = \dfrac{-(z+v)}{2}.
		\end{align*}
	It follows that
		\begin{align}\label{2.12c}
		z\leq 0 \quad \mbox{and}\quad z + v\ge 0.
		\end{align}
		Conversely, if~\eqref{2.12c} is satisfied, then~\eqref{2.1c} holds. Consequently, the inclusion~$z\in\widehat{\partial}^{2} f(\bar{x})(v)$ means that $-v\leq z\leq 0$. So, choosing $v=1$ and $z=-1$, one has  $v\in T_C(\ox)$, $\nabla f(\ox)v =0$, and $z\in \widehat{\partial}^2 f(\ox)(v)$. Clearly, \eqref{second_order_formula_1n} holds for any $w\in T_C^2(\ox,v)$ because $\nabla f (\bar x)=0$. However, \eqref{second_order_formula_1na} is violated as $zv=-1$. Note that $-v\notin T_C(\ox)$.
	\end{example}

{\footnotesize \noindent \textbf{Acknowledgements.} This research was supported by  Vietnam Institute for Advanced Study in Mathematics (VIASM). Duong Thi Viet An was also supported by the Simons Foundation Grant Targeted for Institute of Mathematics, Vietnam Academy of Science and Technology.}

\end{document}